\newtheorem{theorem}{Theorem}[section]
\newtheorem{lemma}{Lemma}[section]
\newtheorem{corollary}{Corollary}[section]
\theoremstyle{definition}
\newtheorem{definition}{Definition}[section]
\theoremstyle{remark}
\newtheorem{remark}{Remark}[section]
\newcommand{\R}{\mathbb{R}}
\newcommand{\h}{\mathbb{H}}
\newcommand{\s}{\mathbb{S}}
\newcommand{\C}{\mathbb{C}}
\newcommand{\ria}{\rightarrow}
\newcommand{\om}{\omega}
\newcommand{\n}{\nabla}
\newcommand{\ran}{\rangle}
\newcommand{\lan}{\langle}
\DeclareMathOperator{\hess}{Hess}
\DeclareMathOperator{\rad}{rad}
\DeclareMathOperator{\trace}{trace}
\numberwithin{equation}{section}
\title[Hopf type theorems in warped product manifolds]{Hopf type theorems for surfaces in the de Sitter-Schwarzschild and Reissner-Nordstrom manifolds}
\author{Hil\'ario Alencar \and Greg\'orio Silva Neto}
\dedicatory{Dedicated to Renato Tribuzy by the occasion of his 75th birthday}
\date{August 06, 2021}
\address{Instituto de Matemática\\
Universidade Federal de Alagoas\\
Macei\'o, AL, 57072-900, Brasil\\}
\email{hilario@mat.ufal.br}
\address{Instituto de Matemática\\
Universidade Federal de Alagoas\\ 
Macei\'o, AL, 57072-900, Brasil\\}
\email{gregorio@im.ufal.br}
\begin{document}
\subjclass{Primary 53C42, 53C21; Secondary 30F30, 30F10, 30A10, 58J05}


\footnotetext{Hil\'ario Alencar and Greg\'orio Silva Neto were partially supported by the National Council for Scientific and Technological Development - CNPq of Brazil.}

\begin{abstract}

In 1951, H. Hopf proved that the only surfaces, homeomorphic to the sphere, with constant mean curvature in the Euclidean space are the round (geometrical) spheres. These results were generalized by S. S. Chern, and then by Eschenburg and Tribuzy, for surfaces, homeomorphic to the sphere, in Riemannian manifolds with constant sectional curvature whose mean curvature function satisfies some bound on its differential. In this paper, using techniques partial differential equations in the complex plane which generalizes the notion of holomorphy, we extend these results for surfaces in a wide class of warped product manifolds, which includes, besides the classical space forms of constant sectional curvature, the de Sitter-Schwarzschild manifolds and the Reissner-Nordstrom manifolds, which are time slices of solutions of the Einstein field equations of the general relativity. 
\end{abstract}
\keywords{Hopf differential, mean curvature, de Sitter-Schwarzschild, Reissner-Nordstrom, isometric immersion, conformally flat, warped product}

\maketitle

\section{Introduction}
%
%
%
%
%
%

In 1951, H. Hopf, see \cite{hopf} and \cite{hopf-1000}, proved that the only surfaces with constant mean curvature in $\R^3$, homeomorphic to the sphere, are the round spheres. After 32 years, the result of Hopf was extend to three-dimensional Riemannian manifolds of constant sectional curvature in 1983 by S.-S. Chern, see \cite{chern}, proving that the only surfaces with constant mean curvature in these spaces, homeomorphic to the sphere, are the geodesic spheres. Later, in 1991, J. Eschenburg and R. Tribuzy (see Theorem 3, p. 151 of \cite{E-T}) observed that, to obtain a Hopf type result, it is not necessary the immersion to have constant mean curvature, but just that the differential of the mean curvature function satisfies some upper bound, namely


\begin{theorem}[Eschenburg-Tribuzy]\label{E-T-0}
Let $Q_c^3$ be a three-dimensional Riemannian manifold with constant sectional curvature $c\in\R.$ Let $X:\Sigma\ria Q_c^3$ be an immersed surface with mean curvature function $H.$ Assume that $\Sigma$ is homeomorphic to the sphere. If there exists a local $L^p,$ $p>2,$ function $f:\Sigma\ria\R$ such that
\begin{equation}\label{cauchy-E-T}
|dH|\leq f\sqrt{H^2-K + c},
\end{equation}
where $K$ is the Gaussian curvature of $\Sigma,$ then $X(\Sigma)$ is totally umbilical.
\end{theorem}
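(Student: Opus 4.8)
The plan is to follow Hopf's original strategy, recast through a Cauchy--Riemann type inequality for the Hopf differential together with the Vekua--Bers theory of generalized analytic functions. Since $\Sigma$ is homeomorphic to the sphere, it carries a global conformal structure; I work in a local isothermal parameter $z=u+iv$, in which the induced metric is $\lambda^2\,|dz|^2$. Writing $L,M,N$ for the coefficients of the second fundamental form in the real coordinates $(u,v)$, I introduce the Hopf differential
\begin{equation*}
\psi\,dz^2=\left(\frac{L-N}{2}-iM\right)dz^2.
\end{equation*}
Its zeros are precisely the umbilic points of $X$, and the Gauss equation in constant curvature $c$, together with $H=\tfrac{1}{2}(\kappa_1+\kappa_2)$ and $K-c=\kappa_1\kappa_2$, gives the identity $|\psi|^2=\lambda^4\,(H^2-K+c)$. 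Thus the right-hand side of \eqref{cauchy-E-T} is, up to the conformal factor, exactly $|\psi|$.

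The analytic core is the Codazzi equation. In a space of constant curvature the ambient curvature correction terms vanish, and a standard computation in isothermal coordinates yields
\begin{equation*}
\frac{\partial\psi}{\partial\bar z}=\lambda^2\,\frac{\partial H}{\partial z}
\end{equation*}
(the precise numerical constant depending on normalizations and being immaterial below). Using $|\partial_z H|=\tfrac{\lambda}{2}\,|dH|$ and the hypothesis \eqref{cauchy-E-T}, this turns into the pointwise inequality
\begin{equation*}
\left|\frac{\partial\psi}{\partial\bar z}\right|\leq\frac{\lambda f}{2}\,|\psi|.
\end{equation*}
Because $\lambda$ is smooth and positive, the coefficient $\widetilde f:=\lambda f/2$ is again locally $L^p$ with $p>2$.

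Now $\psi$ is a generalized analytic function. I would invoke the similarity principle of Vekua and Bers: a solution of $|\partial_{\bar z}\psi|\leq\widetilde f\,|\psi|$ with $\widetilde f\in L^p_{\mathrm{loc}}$, $p>2$, can be written locally as $\psi=e^{\omega}g$ with $g$ holomorphic and $\omega$ H\"older continuous, the bound $p>2$ being exactly what forces the continuity of the solution $\omega$ of the associated $\bar\partial$-problem. Hence $\psi$ either vanishes identically or has only isolated zeros, at each of which it has a well-defined positive order equal to that of the corresponding zero of $g$.

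The conclusion is topological. Away from the umbilic points the principal directions define a line field on $\Sigma\cong S^2$ whose singularities are the zeros of $\psi$, a zero of order $k$ contributing index $-k/2$. If $\psi\not\equiv0$, the Poincar\'e--Hopf theorem for line fields would give $\sum_i(-k_i/2)=\chi(S^2)=2$, i.e. $\sum_i k_i=-4$, contradicting $k_i>0$; equivalently, a nonzero generalized-analytic section of the square of the canonical bundle, of degree $-4$ on $S^2$, cannot have only positively indexed zeros. Therefore $\psi\equiv0$ and $X(\Sigma)$ is totally umbilical. I expect the main obstacle to be the third step: ensuring that the merely $L^p$ ($p>2$) regularity of $f$, rather than boundedness or continuity, still yields the similarity principle with isolated, positively indexed zeros; this rests on the $L^p$ (Calder\'on--Zygmund) mapping properties of the Cauchy transform, which is precisely where the threshold $p>2$ is sharp.
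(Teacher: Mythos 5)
Your proposal is correct and follows essentially the same route as the paper: the Hopf differential and its zeros at umbilics, the Gauss equation giving $|\psi|^2=\lambda^4(H^2-K+c)$, the Codazzi equation $\psi_{\bar z}=\lambda^2 H_z$ yielding the Cauchy--Riemann inequality, the Vekua--Bers similarity principle (which is exactly the content of the paper's Lemma~\ref{E-T}), and the Poincar\'e--Hopf index contradiction on the sphere (the paper's Lemma~\ref{A-dC-T}). The only cosmetic difference is that the paper runs this argument with ambient curvature correction terms so as to cover general warped products, with the constant-curvature case recovered when those terms vanish, precisely as in your step where the Codazzi correction drops out.
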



In this paper we generalize the Eschenburg-Tribuzy theorem for the more general class of three-dimensional Riemannian manifolds $M^3=I\times\s^2,$ where $I=(0,b)$ or $I=(0,\infty),$ with the metric 
\begin{equation}\label{warped}
\lan\cdot,\cdot\ran =dt^2+h(t)^2d\om^2,    
\end{equation}
where $h:I\rightarrow\R$ is a smooth function, called warping function, and $d\om^2$ denotes the canonical metric of the $2$-dimensional round sphere $\s^2.$ With the metric (\ref{warped}), the product $M^3=I\times\s^2$ is called a warped product manifold and generalizes the space forms with constant sectional curvature. In fact, the metrics of the space forms of constant sectional curvature $c\in\R$ can be written in polar coordinates as in \eqref{warped}, where
\[
h(t)=t \ \mbox{for} \ \R^3, \ h(t) = \dfrac{1}{\sqrt{c}}\sin(\sqrt{c}t)\ \mbox{for} \ \s^3(c),\ h(t) =\dfrac{1}{\sqrt{-c}}\sinh(\sqrt{-c}t) \ \mbox{for}\ \h^3(c).
\]
The warped product manifold $M^3$ has two different sectional curvatures which depend only on the parameter $t$, one tangent to the slices $\{t\}\times\s^2,$ denoted by $K_{\tan}(t),$ and other relative to the planes which contains the radial direction $\partial t,$ which de denote by $K_{\rad}(t).$ In terms of the warping function, we can write
\begin{equation}\label{tan-rad}
K_{\tan}(t) =\dfrac{1-h'(t)^2}{h(t)^2} \ \mbox{and} \ K_{\rad}(t)=-\frac{h''(t)}{h(t)},
\end{equation}
where $X,Y\in TM^3,\ X\perp \n t, \ Y\perp \n t.$ 

These manifolds were first introduced by Bishop and O' Neill in 1969, see \cite{B-ON}, and is having increasing importance due to its applications as model spaces in general relativity. Part of these applications comes from the metrics which are solutions of the Einstein equations, as the de Sitter-Schwarzschild metric and Reissner-Nordstrom metric, which we introduce later.

In recent years, immersions in warped product manifolds have been extensively studied, with many interesting papers in this subject, for instance see \cite{montiel-1}, \cite{Montiel}, \cite{bray-2}, \cite{ritore}, \cite{B-M}, \cite{DR}, \cite{AD-1}, \cite{AD-2}, \cite{AIR}, \cite{bessa}, \cite{BCL}, \cite{Xia-Wu-1}, \cite{Gimeno}, \cite{GIR}, \cite{sal-sal}, \cite{Xia-Wu-2}, \cite{aledo}, \cite{GL}, \cite{ASN}, \cite{GLW}, and \cite{SN}. We can also cite the book of Petersen, see \cite{petersen}, for a modern presentation of warped product manifolds and the book of Besse \cite{besse} for an introduction to general relativity and the deduction of Schwarzschild space-time from the Einstein equations.

The main result of this paper is the following generalization of Theorem \ref{E-T-0} for a class of warped product manifolds which contains the de Sitter-Schwarzschild and the Reissner-Nordstrom manifolds:

%

\begin{theorem}\label{hopf-warped}
Let $\Sigma$ be a surface, homeomorphic to the sphere, immersed in a warped product manifold $M^3=I\times\s^2$, with mean curvature function $H$. If there exists a non-negative $L^p,$ $p>2,$ function $f:\Sigma\ria\R$ such that
\begin{equation}\label{ineq-warped}
|dH + (K_{\tan}(t) - K_{\rad}(t))\nu dt|\leq f\sqrt{H^2 - K + K_{\tan}(t) - (1-\nu^2)(K_{\tan}(t)-K_{\rad}(t))},
\end{equation}
then $\Sigma$ is totally umbilical. 

Moreover, if $K_{\tan}(t)\neq K_{\rad}(t),$ except possibly for a discrete set of values $t\in I,$ and $\Sigma$ has constant mean curvature, then $\Sigma$ is a slice.
\end{theorem}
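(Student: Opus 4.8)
The plan is to adapt Hopf's original strategy, which rests on the Hopf differential and the topology of the sphere. Since $\Sigma$ is homeomorphic to the sphere it is orientable; fix a global unit normal $N$ and set $\nu=\langle N,\partial_t\rangle$. Choose local conformal parameters $z=x+iy$, so that the induced metric is $\lambda^2\,|dz|^2$, and define the Hopf differential $\Phi=\phi\,dz^2$, where $\phi=\tfrac12(h_{11}-h_{22})-ih_{12}$ and the $h_{ij}$ are the coefficients of the second fundamental form. A direct computation gives $|\phi|=\lambda^2\,\bigl|\tfrac{\kappa_1-\kappa_2}{2}\bigr|$, so $\Phi$ is a globally defined quadratic differential on $\Sigma$ measuring the failure to be umbilical. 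To interpret the radicand in \eqref{ineq-warped}, I would decompose a tangent frame into its radial and fiber components to obtain the ambient sectional curvature along $T_p\Sigma$, namely $(1-\nu^2)K_{\rad}(t)+\nu^2 K_{\tan}(t)=K_{\tan}(t)-(1-\nu^2)(K_{\tan}(t)-K_{\rad}(t))$. The Gauss equation $K=K_{\mathrm{amb}}(T_p\Sigma)+\kappa_1\kappa_2$ then yields
\[
H^2-K+K_{\tan}(t)-(1-\nu^2)(K_{\tan}(t)-K_{\rad}(t))=\left(\frac{\kappa_1-\kappa_2}{2}\right)^2=\frac{|\phi|^2}{\lambda^4},
\]
so the right-hand side of \eqref{ineq-warped} equals $f\,|\phi|/\lambda^2$.

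The main step is the Codazzi equation in the warped product. Writing it as $(\nabla_X S)Y-(\nabla_Y S)X=(\overline{R}(X,Y)N)^{\top}$, the essential new feature is that the ambient is not a space form, so the tangential part of $\overline{R}(X,Y)N$ does not vanish. Computing this term from the warped structure, in which the only curvatures are $K_{\tan}$ and $K_{\rad}$, and passing to conformal coordinates, I expect to arrive at
\[
\frac{\partial\phi}{\partial\bar z}=c\,\lambda^2\left[\frac{\partial H}{\partial z}+(K_{\tan}(t)-K_{\rad}(t))\,\nu\,\frac{\partial t}{\partial z}\right]
\]
for a universal nonzero constant $c$, where $t$ now denotes the radial coordinate restricted to $\Sigma$; this is exactly why the correction $(K_{\tan}(t)-K_{\rad}(t))\,\nu\,dt$ appears on the left of \eqref{ineq-warped}. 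Identifying this curvature correction is the technical heart of the argument and the step I expect to be the main obstacle. Using that a real $1$-form $\eta$ satisfies $|\eta|=\tfrac{2}{\lambda}|\eta_z|$, the hypothesis \eqref{ineq-warped} together with the Gauss computation then gives the pointwise bound
\[
\left|\frac{\partial\phi}{\partial\bar z}\right|\le\psi\,|\phi|,\qquad \psi=\tfrac{|c|}{2}\,\lambda\, f\in L^p_{\mathrm{loc}},\ p>2.
\]

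With this differential inequality I would invoke the classical similarity principle of Carleman-Bers-Vekua, as in Eschenburg-Tribuzy: a solution of $|\partial_{\bar z}\phi|\le\psi|\phi|$ with $\psi\in L^p_{\mathrm{loc}}$, $p>2$, is locally of the form $\phi=e^{\omega}g$ with $g$ holomorphic and $\omega$ continuous, so $\phi$ has only isolated zeros, each of positive finite order. Thus $\Phi$ behaves like a holomorphic quadratic differential; but on a surface homeomorphic to $\s^2$ such a differential would have degree $4\cdot0-4=-4<0$, which is impossible for a nonzero section with no poles. Equivalently, the line field determined by $\Phi$ would have total index $\chi(\s^2)=2$ while each zero contributes a nonpositive index. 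Hence $\Phi\equiv0$, that is $\phi\equiv0$, and $\Sigma$ is totally umbilical.

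For the final assertion, note that once $\Sigma$ is umbilical the radicand in \eqref{ineq-warped} vanishes, so the inequality forces $dH+(K_{\tan}(t)-K_{\rad}(t))\,\nu\,dt=0$ on all of $\Sigma$. If $H$ is constant this reduces to $(K_{\tan}(t)-K_{\rad}(t))\,\nu\,dt=0$, so $\nu\,dt=0$ on the open dense set where $K_{\tan}\neq K_{\rad}$ and, by continuity, everywhere. Since the gradient of $t$ on $M$ is the unit field $\partial_t$, its tangential part satisfies $|dt|^2=1-\nu^2$ on $\Sigma$, whence $0=|\nu\,dt|=|\nu|\sqrt{1-\nu^2}$ forces $\nu(p)\in\{-1,0,1\}$ at every point; as $\nu$ is continuous and $\Sigma$ connected, $\nu$ is constant. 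The value $\nu\equiv0$ would make $\partial_t$ everywhere tangent to $\Sigma$, so $\Sigma$ would contain the noncompact integral curves of $\partial_t$, contradicting compactness of $\Sigma$. Therefore $\nu\equiv\pm1$, the function $t$ is constant on $\Sigma$, and $\Sigma$ is a slice.
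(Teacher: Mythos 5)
Your proposal reproduces the paper's strategy faithfully, and every formula you state is in fact correct: the ambient sectional curvature of $T_p\Sigma$ is indeed $\nu^2K_{\tan}(t)+(1-\nu^2)K_{\rad}(t)$, and the identity you say you ``expect to arrive at,''
\[
\phi_{\bar z}=c\,\lambda^2\Bigl[H_z+(K_{\tan}(t)-K_{\rad}(t))\,\nu\,t_z\Bigr],
\]
is exactly what the paper proves (up to the normalization constant). But this identity is never derived in your argument --- it is announced and flagged as ``the main obstacle,'' and that is precisely the point: once it is granted, everything else (the pointwise bound $|\phi_{\bar z}|\le\psi|\phi|$, the similarity principle, the Poincar\'e index count on a genus-zero surface, and even the endgame for the slice) is the standard Hopf/Eschenburg--Tribuzy machinery. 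So the whole theorem reduces to the one computation you skipped, and as written the proposal is a correct plan rather than a proof. For comparison, this computation is where the paper invests essentially all of its effort: instead of attacking the tangential part of $\overline{R}(X,Y)N$ directly from the warped-product curvature tensor, it realizes $I\times_h\s^2$ as a conformally flat manifold $M^3_F$ with radial conformal factor, proves $P_{\bar z}=\frac{\alpha}{2}H_z+\lan\overline{R}(X_z,X_{\bar z})X_z,N_F\ran_F$ (Lemma \ref{hopf-diff}), converts the curvature term into the Euclidean Hessian $-\frac{\alpha}{2F}\hess F(X_z,N_F)$ via Kulkarni's conformal-change formula (Lemmas \ref{kulk} and \ref{hess-NF}), and then translates that Hessian into $\frac{\alpha}{2}(K_{\tan}(t)-K_{\rad}(t))\nu\,t_z$ through the relations between $u$, $h$, $r$, $t$ (equation \eqref{Pz-000}); even the angle identity $|\nabla t|^2_{T\Sigma}=1-\nu^2$ that you quote in the endgame is one of the lemmas needed for this (Lemma \ref{angle-f}). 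Your proposed direct route through Codazzi and the explicit warped-product curvature tensor would also succeed, but it must actually be carried out.

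The portions you did write out are sound, with two small repairs. First, the set of points of $\Sigma$ where $K_{\tan}(t)\neq K_{\rad}(t)$ is open but need not be dense: a priori $t$ could take a single exceptional value on an open set. The conclusion $\nu\,dt\equiv0$ still holds, because on the interior of the complement $t$ is continuous with values in a discrete set, hence locally constant, so $dt=0$ there; by continuity $\nu\,dt$ vanishes everywhere. Second, your exclusion of $\nu\equiv0$ --- completeness of the flow of $\partial_t$ restricted to the compact surface versus the fact that $t$ increases at unit speed along that flow, while $t(\Sigma)$ is bounded --- is valid, and is a legitimate alternative to the paper's argument, which instead takes the smallest slab $[t_0,t_1]\times\s^2$ containing $\Sigma$ and uses the tangency with the slice $\{t_1\}\times\s^2$ to produce a point where $\nu^2=1$, then splits $\Sigma$ into the disjoint closed sets $\{\nu=0\}$ and $\{dt=0\}$.
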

%

\begin{remark}
Actually, some additional hypothesis as (\ref{ineq-warped}) is needed in order to classify the slices as the only constant mean curvature spheres. In fact, it was observed by Brendle (see \cite{brendle-0}, Theorem 1.5, p. 250) that a result of Pacard and Xu (see \cite{pacard}, Theorem 1.1, p. 276) implies that in some warped product manifolds there are small spheres with constant mean curvature which are not umbilical.
\end{remark}

\begin{remark}
To obtain the slice in the second part of Theorem \ref{hopf-warped}, the assumption over $M^3$ that $K_{\tan}(t)\neq K_{\rad}(t),$ except possibly for a discrete set of values $t\in I,$ is necessary. In fact, if $K_{\tan}(t)=K_{\rad}(t)$ for some interval $(t_0,t_1)\subset I,$  then all the sectional curvatures of $M^3$ will depend only on $t.$ This will imply, by the classical Schur's Theorem, that $\widetilde{M}^3:=(t_0,t_1)\times\s^2$ has constant sectional curvature. In this case, there exists spheres, other than the slices, with constant mean curvature (in fact, the geodesic spheres centered in some point of $\widetilde{M}^3)$.
\end{remark}

Two of the most famous examples of warped product manifolds are the de Sitter-Schwarzschild manifolds and the Reissner-Nordstrom manifolds, which we describe below.

\begin{definition}[The de Sitter-Schwarzschild manifolds] Let $m>0,$ $c\in\R,$ and 
\[
(s_0,s_1)=\{r>0 ; 1-mr^{-1}-cr^2>0\}.
\] 
If $c\leq 0,$ then $s_1=\infty.$ If $c>0,$ assume that $cm^2<\frac{4}{27}.$ The de Sitter-Schwarzschild manifold is defined by $M^3(c)=(s_0,s_1)\times \s^2$ endowed with the metric
\[
\lan\cdot,\cdot\ran=\dfrac{1}{1-mr^{-1}-cr^2}dr^2 + r^2 d\om^2.
\]
In order to write the metric in the form (\ref{warped}), define $F:[s_0,s_1)\ria \R$ by
\[
F'(r)=\dfrac{1}{\sqrt{1-mr^{-1}-cr^2}}, \ F(s_0)=0.
\]
Taking $t=F(r),$ we can write $\lan\cdot,\cdot\ran=dt^2+h(t)^2d\om^2,$ where $h:[0,F(s_1))\ria[s_0,s_1)$ denotes the inverse function of $F.$ The function $h$ clearly satisfies
\begin{equation}\label{defi-SS}
h'(t)=\sqrt{1-mh(t)^{-1}-ch(t)^2},\ h(0)=s_0,\ \mbox{and}\ h'(0)=0.
\end{equation}
\end{definition}

For these manifolds, we have

\begin{corollary}[The de Sitter-Schwarzschild manifolds]
Let $\Sigma$ be a surface, homeomorphic to the sphere, immersed in the de Sitter-Schwarzschild manifold, with constant mean curvature. If there existes a non-negative $L^p,$ $p>2,$ function $f:\Sigma\ria\R$ such that
\[
|dt|\leq f \sqrt{H^2-K + c + \frac{m(3\nu^2-1)}{2h(t)^3}},
\]
then $\Sigma$ is a slice.

Here, $K$ is the Gaussian curvature of $\Sigma,$ $\nu=\lan \n t,N\ran$ is the angle function, and $N$ is the unitary normal vector field of $\Sigma$ in the de Sitter-Schwarzschild manifold.
\end{corollary}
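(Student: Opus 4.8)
The plan is to deduce the corollary directly from Theorem \ref{hopf-warped} by specializing the warped-product curvatures to the de Sitter-Schwarzschild warping function and checking that the abstract inequality (\ref{ineq-warped}) collapses to the stated one. First I would compute $K_{\tan}(t)$ and $K_{\rad}(t)$ from (\ref{tan-rad}) using the defining relation (\ref{defi-SS}), namely $h'(t)^2 = 1 - mh(t)^{-1} - ch(t)^2$. Since $1 - h'^2 = mh^{-1} + ch^2$, this gives $K_{\tan}(t) = \frac{m}{h^3} + c$. Differentiating the relation for $h'^2$ yields $h'' = \frac{m}{2h^2} - ch$ (valid on the open interval where $h' > 0$, and by continuity up to $t=0$), whence $K_{\rad}(t) = -h''/h = -\frac{m}{2h^3} + c$. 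The crucial consequence is
\[
K_{\tan}(t) - K_{\rad}(t) = \frac{3m}{2h(t)^3} > 0.
\]

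Next I would substitute these into (\ref{ineq-warped}). Because $H$ is constant, $dH = 0$, so the left-hand side of (\ref{ineq-warped}) becomes $|(K_{\tan}(t) - K_{\rad}(t))\nu\, dt| = \frac{3m}{2h^3}|\nu|\,|dt|$. For the radicand, a short algebraic simplification shows
\[
K_{\tan}(t) - (1-\nu^2)(K_{\tan}(t) - K_{\rad}(t)) = c + \frac{m(3\nu^2 - 1)}{2h(t)^3},
\]
so the quantity under the square root in (\ref{ineq-warped}) is exactly $H^2 - K + c + \frac{m(3\nu^2-1)}{2h(t)^3}$, which matches the radicand appearing in the corollary.

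Finally, starting from the corollary's hypothesis $|dt| \leq f\sqrt{H^2 - K + c + \frac{m(3\nu^2-1)}{2h^3}}$ and multiplying through by the non-negative factor $\frac{3m}{2h^3}|\nu|$, I would recover precisely (\ref{ineq-warped}) with the new function $\tilde{f} := \frac{3m}{2h^3}|\nu|\,f$. The point I expect to require the most care is verifying that $\tilde{f}$ still qualifies as a non-negative $L^p$, $p > 2$, function: since $\Sigma$ is homeomorphic to the sphere it is compact, so the $t$-coordinate ranges over a compact subinterval of $[0,F(s_1))$ on which $h \geq s_0 > 0$; hence $\frac{3m}{2h^3}$ is bounded, and with $|\nu| \leq 1$ the whole coefficient is bounded by some constant $C$, giving $\tilde{f} \leq C f \in L^p$ and $\tilde{f} \geq 0$. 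Theorem \ref{hopf-warped} then yields that $\Sigma$ is totally umbilical. Moreover, since $K_{\tan}(t) - K_{\rad}(t) = \frac{3m}{2h^3}$ never vanishes and $H$ is constant, the second part of Theorem \ref{hopf-warped} applies and forces $\Sigma$ to be a slice.
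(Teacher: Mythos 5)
Your proposal is correct and is exactly the intended argument: the paper states this corollary as a direct specialization of Theorem \ref{hopf-warped}, with the same computations $K_{\tan}(t)=c+\frac{m}{h(t)^3}$, $K_{\rad}(t)=c-\frac{m}{2h(t)^3}$, hence $K_{\tan}(t)-K_{\rad}(t)=\frac{3m}{2h(t)^3}>0$, which both converts the hypothesis into inequality (\ref{ineq-warped}) (absorbing the bounded factor $\frac{3m}{2h^3}|\nu|$ into the $L^p$ function, using compactness of $\Sigma$ and $h\geq s_0>0$) and guarantees the nondegeneracy condition needed for the slice conclusion. Your care about the $L^p$ membership of the modified function and the validity of $h''=\frac{m}{2h^2}-ch$ up to $t=0$ are exactly the right points to check, and both are handled correctly.
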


\begin{definition}[The Reissner-Nordstrom manifolds]\label{ex-RN} The Reissner-Nordstrom manifold is defined by $M^3=(s_0,\infty)\times\s^2,$ with the metric
\[
\lan\cdot,\cdot\ran=\dfrac{1}{1-mr^{-1}+q^2r^{-2}}dr^2 + r^2 d\om^2,
\]
where $m>2q>0$ and $s_0=\frac{2q^2}{m-\sqrt{m^2-4q^2}}$ is the larger of the two solutions of $1-mr^{-1}+q^2r^{-2}=0.$ In order to write the metric in the form (\ref{warped}), define $F:[s_0,\infty)\ria \R$ by
\[
F'(r)=\dfrac{1}{\sqrt{1-mr^{-1}+q^2r^{-2}}}, \ F(s_0)=0.
\]
Taking $t=F(r),$ we can write $\lan\cdot,\cdot\ran=dt^2+h(t)^2d\om^2,$ where $h:[0,\infty)\ria[s_0,\infty)$ denotes the inverse function of $F.$ The function $h$ clearly satisfies
\begin{equation}\label{defi-RN}
h'(t)=\sqrt{1-mh(t)^{-1}+q^2h(t)^{-2}},\ h(0)=s_0,\ \mbox{and}\ h'(0)=0.
\end{equation}
\end{definition}

For these manifolds, we have

\begin{corollary}[The Reissner-Nordstrom manifolds]
Let $\Sigma$ be a surface, homeomorphic to the sphere, immersed in the Reissner-Nordstrom manifold, with constant mean curvature. If there existes a non-negative $L^p,$ $p>2,$ function $f:\Sigma\ria\R$ such that
\[
|dt|\leq f \sqrt{H^2-K +\frac{m(3\nu^2-1)}{2h(t)^3}+ \frac{q^2(1-2\nu^2)}{h(t)^4}},
\]
then $\Sigma$ is a slice.

Here, $K$ is the Gaussian curvature of $\Sigma,$ $\nu=\lan \n t,N\ran$ is the angle function, and $N$ is the unitary normal vector field of $\Sigma$ in the Reissner-Nordstrom manifold.
\end{corollary}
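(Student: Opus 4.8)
The plan is to obtain this statement as a direct consequence of Theorem \ref{hopf-warped}, so the real content is a computation matching the two inequalities. First I would read off the two sectional curvatures of the Reissner-Nordstrom manifold from \eqref{defi-RN} and \eqref{tan-rad}. Writing $h'(t)^2 = 1 - mh^{-1} + q^2 h^{-2}$ gives immediately $K_{\tan}(t) = (1-h'^2)/h^2 = m h^{-3} - q^2 h^{-4}$; differentiating $h'^2$ and dividing by $2h'$ yields $h'' = m/(2h^2) - q^2/h^3$, hence $K_{\rad}(t) = -h''/h = -m/(2h^3) + q^2/h^4$. Subtracting,
\[
K_{\tan}(t) - K_{\rad}(t) = \frac{3m}{2h^3} - \frac{2q^2}{h^4} = \frac{3mh - 4q^2}{2h^4}.
\]

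Next I would substitute these into \eqref{ineq-warped}. A short computation gives
\[
K_{\tan} - (1-\nu^2)(K_{\tan} - K_{\rad}) = \frac{m(3\nu^2-1)}{2h^3} + \frac{q^2(1-2\nu^2)}{h^4},
\]
so the radicand in \eqref{ineq-warped} is exactly the one in the statement. It remains to match the left-hand sides. Since $H$ is constant, $dH=0$ and the left side of \eqref{ineq-warped} reduces to $|K_{\tan}-K_{\rad}|\,|\nu|\,|dt|$. Because $\Sigma$ is compact (homeomorphic to the sphere), $t\circ X$ has image in a compact subset of $[0,\infty)$ on which $h\geq s_0>0$, so $|K_{\tan}-K_{\rad}|$ is bounded by a constant $C$; together with $|\nu|\leq 1$ and the hypothesis $|dt|\leq f\sqrt{\cdots}$ this yields
\[
|dH + (K_{\tan}-K_{\rad})\nu\,dt| \leq |K_{\tan}-K_{\rad}|\,|\nu|\, f\,\sqrt{\cdots} \leq \tilde f\,\sqrt{\cdots},
\]
where $\tilde f = |K_{\tan}-K_{\rad}|\,|\nu|\,f$ is a non-negative function dominated pointwise by $Cf$, hence still $L^p$. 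Thus hypothesis \eqref{ineq-warped} holds and Theorem \ref{hopf-warped} makes $\Sigma$ totally umbilical.

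Finally, for the slice conclusion I would check the extra hypothesis of the second part of Theorem \ref{hopf-warped}, that $K_{\tan}(t)\neq K_{\rad}(t)$ off a discrete set. From $K_{\tan}-K_{\rad} = (3mh-4q^2)/(2h^4)$ this can fail only where $h = 4q^2/(3m)$; but $s_0 = (m+\sqrt{m^2-4q^2})/2 > m/2 > 4q^2/(3m)$ (using $m>2q$), so on the domain $h\geq s_0$ we in fact have $K_{\tan}-K_{\rad}>0$ everywhere. Since $\Sigma$ also has constant mean curvature, the second part of Theorem \ref{hopf-warped} applies and $\Sigma$ is a slice; the de Sitter-Schwarzschild case is entirely analogous, using $h'^2 = 1 - mh^{-1} - ch^2$ in place of \eqref{defi-RN}. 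The only genuinely non-mechanical point is the passage from the corollary's left side $|dt|$ to that of \eqref{ineq-warped}: one must absorb the factor $|K_{\tan}-K_{\rad}|\,|\nu|$ into the test function without leaving $L^p$, which is precisely what boundedness of $h$ from below on the compact $\Sigma$ guarantees.
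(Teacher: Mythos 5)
Your proposal is correct and follows exactly the route the paper intends: the corollary is a direct application of Theorem \ref{hopf-warped}, and your computations of $K_{\tan}(t)=mh^{-3}-q^2h^{-4}$, $K_{\rad}(t)=-m/(2h^3)+q^2/h^4$, the radicand identity, and the sign of $K_{\tan}-K_{\rad}$ on $h\geq s_0$ (via $s_0>m/2>4q^2/(3m)$) are all accurate. Your care in absorbing the bounded factor $|K_{\tan}-K_{\rad}|\,|\nu|$ into the $L^p$ function $f$ is precisely the step the paper leaves implicit, and it is handled correctly.
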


\begin{remark}
Since the warped product manifold is smooth at $t=0$ if and only if $h(0)=0, \ h'(0)=1,$ and all the even order derivatives are zero at $t=0$, i.e., $h^{(2k)}(0) = 0,\ k > 0,$ see \cite{petersen}, Proposition 1, p. 13, we can see the de Sitter-Schwarzschild manifolds and the Reissner-Nordstrom manifolds are singular at $t=0.$
\end{remark}

\section{Preliminaries}\label{pre-calc}


Let $F:\R^3\ria\R$ be a smooth and positive function. We will denote by 
\begin{equation}
M^3_F = (\R^3, \lan\cdot,\cdot\ran_F), \ \mbox{where} \ \lan\cdot,\cdot\ran_F=\dfrac{1}{F(x_1,x_2,x_3)^2}(dx_1^2+dx_2^2+dx_3^2)
\end{equation}
be the conformally flat three dimensional manifold.
Denote by $f=\log F$ and by $f_i = \dfrac{\partial f}{\partial x_i},$ $i=1,2,3.$ If $\Gamma_{ij}^k,$ $i,j,k=1,2,3,$ are the Christoffel symbols of $M^3,$ then
\begin{equation}\label{Gamma-Cris}
\begin{aligned}
\Gamma_{11}^{1}&=-f_1, \ \Gamma_{11}^{2}=f_2, \ \Gamma_{11}^{3}=f_3, \ \Gamma_{12}^{1}=\Gamma_{21}^{1}=-f_2,\ \Gamma_{12}^{2}=\Gamma_{21}^{2}=-f_1, \ \Gamma_{12}^{3}=\Gamma_{21}^{3}=0,\\
\Gamma_{13}^{1}&=\Gamma_{31}^{1}=-f_3, \ \Gamma_{13}^{2}=\Gamma_{31}^{2}=0, \ \Gamma_{13}^{3}=\Gamma_{31}^{3}=-f_1, \ \Gamma_{22}^{1}=f_1, \ \Gamma_{22}^{2}=-f_2, \ \Gamma_{22}^{3}=f_3,\\
\Gamma_{23}^{1}&=\Gamma_{32}^{1}=0, \ \Gamma_{23}^{2}=\Gamma_{32}^{2}=-f_3, \ \Gamma_{23}^{3}=\Gamma_{32}^{3}=-f_2, \ \Gamma_{33}^{1}=f_1, \ \Gamma_{33}^{2}=f_2, \ \Gamma_{33}^{3}=-f_3.
\end{aligned}
\end{equation}
Let $\{e_1,e_2,e_3\}$ be the canonical basis of $\R^3$ with the canonical metric. The canonical orthonormal frame of $M^3$ is
\[
\begin{aligned}
E_1(x_1,x_2,x_3)&=F(x_1,x_2,x_3)e_1,\\
E_2(x_1,x_2,x_3)&=F(x_1,x_2,x_3)e_2,\\ 
E_3(x_1,x_2,x_3)&=F(x_1,x_2,x_3)e_3.
\end{aligned}
\]
\begin{lemma}\label{connexion}
Let us denote by $\n$ the connection $\R^3$ with the metric $\lan\cdot,\cdot\ran_F.$ We have
\[
\begin{aligned}
\n_{E_1}E_1 &=F_2E_2 + F_3E_3, \ \n_{E_1}E_2 =-F_2E_1,\ \n_{E_1}E_3 = -F_3E_1,\\
\n_{E_2}E_1 &=-F_1E_2, \ \n_{E_2}E_2 = F_1E_1+F_3E_3, \ \n_{E_2}E_3 = -F_3E_2,\\
\n_{E_3}E_1 &=-F_1E_3, \ \n_{E_3}E_2 = - F_2E_3, \ \n_{E_3}E_3 = F_1E_1+F_2E_2,\\      
\end{aligned}
\]
where $F_i=\dfrac{\partial F}{\partial x_i},$ $i=1,2,3.$
\end{lemma}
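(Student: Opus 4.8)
The plan is to reduce the whole computation to the coordinate frame, where the Christoffel symbols are already recorded in \eqref{Gamma-Cris}, and then translate the result back into the orthonormal frame $\{E_1,E_2,E_3\}$. First I would exploit the relation $E_i=Fe_i$ between the orthonormal frame and the coordinate frame $e_i=\partial/\partial x_i$. Using that $\n$ is $C^\infty$-linear in its first slot and satisfies the Leibniz rule in the second, together with $e_iF=\partial F/\partial x_i=F_i$ and $\n_{e_i}e_j=\sum_k\Gamma_{ij}^ke_k$, I compute
\[
\n_{E_i}E_j=\n_{Fe_i}(Fe_j)=F\,\n_{e_i}(Fe_j)=F(e_iF)e_j+F^2\n_{e_i}e_j=FF_ie_j+F^2\sum_{k=1}^3\Gamma_{ij}^ke_k.
\]
Substituting $e_k=F^{-1}E_k$ then gives the working formula $\n_{E_i}E_j=F_iE_j+F\sum_{k=1}^3\Gamma_{ij}^kE_k$ expressed entirely in the orthonormal frame.

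Next I would insert the explicit symbols from \eqref{Gamma-Cris}. Since $f=\log F$, one has $F_i=Ff_i$, so every Christoffel contribution carries a common factor $F$, and the whole table \eqref{Gamma-Cris} can be written compactly as $\Gamma_{ij}^k=-\delta_i^kf_j-\delta_j^kf_i+\delta_{ij}f_k$. Plugging this into the working formula, the term $-Ff_iE_j$ coming from the $-\delta_j^kf_i$ part cancels exactly against $F_iE_j=Ff_iE_j$, and what survives collapses to the single identity
\[
\n_{E_i}E_j=-F_jE_i+\delta_{ij}\sum_{k=1}^3F_kE_k.
\]
Reading off the two cases $i\neq j$ (where only $-F_jE_i$ remains) and $i=j$ (where $\n_{E_i}E_i=\sum_{k\neq i}F_kE_k$) reproduces all nine formulas in the statement.

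There is no genuine obstacle here: once the working formula is in hand, the verification is pure bookkeeping. The only point that requires care is keeping the distinction between $F_i$ and $f_i$ straight and noticing the cancellation of the $F_iE_j$ term against the diagonal Christoffel contribution. Without this observation the nine formulas look unrelated and one is tempted to expand all of them by hand; recognizing the compact identity above both organizes the sign pattern of \eqref{Gamma-Cris} and makes the check immediate.

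As an alternative I could bypass \eqref{Gamma-Cris} altogether and derive the connection directly from the Koszul formula applied to the orthonormal fields $E_i$, using $[E_i,E_j]$ and the metric relations $\lan E_i,E_j\ran_F=\delta_{ij}$. This is self-contained but longer, so recycling the already-listed Christoffel symbols is the shortest route to the stated formulas.
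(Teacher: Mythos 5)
Your proposal is correct and takes essentially the same route as the paper: both reduce to the coordinate frame by computing $\n_{E_i}E_j=\n_{Fe_i}(Fe_j)=FF_ie_j+F^2\n_{e_i}e_j$ (using tensoriality in the first slot and Leibniz in the second) and then substitute the Christoffel symbols of \eqref{Gamma-Cris}, noting $f_i=F_i/F$. Your compact identity $\Gamma_{ij}^k=-\delta_i^kf_j-\delta_j^kf_i+\delta_{ij}f_k$ and the resulting one-line formula $\n_{E_i}E_j=-F_jE_i+\delta_{ij}\sum_k F_kE_k$ simply organize the case-by-case bookkeeping that the paper leaves implicit; it is a tidier presentation of the same argument, not a different one.
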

\begin{proof}
We have
\[
\begin{aligned}
\n_{E_i}E_j&=F^2\n_{e_i}e_j + FF_ie_j\\
&=F^2(\Gamma_{ij}^{1}e_1 + \Gamma_{ij}^{2}e_2 + \Gamma_{ij}^{3}e_3) + F^2f_ie_j.\\
\end{aligned}
\]
The result then follows by replacing the values of $\Gamma_{ij}^k$ given by (\ref{Gamma-Cris}), and noticing that $f_i = F_i/F,$ $i=1,2,3.$
\end{proof}
Let $\Sigma$ be a smooth two dimensional Riemannian surface whose metric in a local coordinate system $\varphi:D\subset\R^2\to\Sigma$ is given by
\[
ds^2=E(x,y)dx^2 +2F(x,y)dxdy + G(x,y)dy^2, \ (x,y)\in D.
\]
A local coordinate system is called isothermal parameters if $E=G$ and $F=0.$ By results of Korn \cite{Korn} and Lichtenstein \cite{Lich} (for instance see the work of Chern \cite{chern-iso} for an elementary proof), if the functions $E,F,G:D\subset\R^2\to\R$ are H\"older continuous of order $0<\lambda<1,$ then every point of $D$ has a neighborhood whose local coordinates are isothermal parameters (remember that a function $f:D\subset\R^2\to\R$ is H\"older continuous of order $\lambda>0$ if $|f(x_2,y_2)-f(x_1,y_1)|\leq C r^\lambda,$ where $r=\sqrt{(y_2-y_1)^2+(x_2-x_1)^2}$). 

Identifying $\R^2$ with the complex plane $\C$ by taking $w=x+iy$ and $\bar{w}=x-iy,$ we have $dw = dx+idy$ and $d\bar{w}=dx-idy,$ which gives the rules of differentiation
\[
\frac{\partial}{\partial w} = \frac{1}{2}\left(\frac{\partial}{\partial x} - i\frac{\partial}{\partial y}\right) \ \mbox{and} \ \frac{\partial}{\partial \bar{w}} = \frac{1}{2}\left(\frac{\partial}{\partial x} + i\frac{\partial}{\partial y}\right).
\]
By using this complexification, we can write
\begin{equation}\label{met-1}
ds^2 = \lambda|dw^2 + \mu d\bar{w}|^2,
\end{equation} 
where
\[
\lambda = \frac{1}{4}(E+G+2\sqrt{EG-F^2}) \ \mbox{and} \ \mu=\frac{1}{4\lambda}(E-G +2iF). 
\]
Here, $\lambda>0$ and $|\mu|<1.$ If $(u,v)$ are isothermal coordinates for $\Sigma,$ then we can write
\[
ds^2=\alpha(u,v)(du^2+dv^2)=\alpha(z)|dz|^2,
\]
where $z=u+iv.$ 

\begin{remark}
The existence of isothermal coordinates can be also proved by applying known existence theorems for the Beltrami equation. In fact, since the change of coordinates satisfies
\begin{equation}\label{met-2}
ds^2=\alpha|dz|^2 = \alpha|z_w|^2\left|dw^2 + \frac{z_{\bar{w}}}{z_w}d\bar{w}\right|^2,
\end{equation} 
comparing \eqref{met-1} and \eqref{met-2}, there exist isothermal parameters in a neighborhood of $\Sigma$ is and only if there exists a solution of the Beltrami differential equation
\[
\frac{\partial z}{\partial\bar{w}}=\mu\frac{\partial z}{\partial w}.
\]
By using $L^p$ estimates for singular integral operators of Calder\'on and Zygmund, it can proved that the solution exists in any neighborhood where $\|\mu\|_\infty<1$ (see for instance \cite{IT}, p.20-21 and p.97).
\end{remark}


In this section, we will consider a smooth Riemannian surface $\Sigma$ and an isometric immersion $X:\Sigma\ria M_F^3.$ Taking isothermal parameters $u$ and $v$ in a neighborhood of $\Sigma$ and complexifying the parameters by taking $z=u+iv,$ we can identify this neighborhood of $\Sigma$ with a subset of $\C$ and obtain
\[
\lan X_z,X_{\bar{z}}\ran_F = \frac{\alpha(z)}{2},
\]
where $X_z = \frac{\partial X}{\partial z},$ $X_{\bar{z}} = \frac{\partial X}{\partial \bar{z}},$ and $\alpha(z)$ is the conformal factor of $\Sigma,$ i.e., $ds^2 = \alpha(z)|dz|^2$ is the metric of $\Sigma.$ In this case, the second fundamental form becomes
\[
II = P dz^2 + H \alpha|dz|^2 + \bar{P}d\bar{z}^2,
\]
where
\[
Pdz^2 = \lan \n_{X_z}X_z,N_F\ran_F
\]
is the Hopf differential of $X,$ i.e., the $(2,0)$-part of the complexified second fundamental form. For more details about the complexification and the Hopf differential, we refer to the chapter VI of the classical book of Hopf, see \cite{hopf-1000}.

\begin{remark}
Here and after, the bar over a quantity will mean the complex conjugate of the quantity.
\end{remark}

Since $X_z = \frac{1}{2}(X_u - iX_v)$ and $X_{\bar{z}}=\frac{1}{2}(X_u+iX_v),$ where $X_u = \frac{\partial X}{\partial u}$ and $X_v = \frac{\partial X}{\partial v},$ we have $X_u=X_z + X_{\bar{z}}$ and $X_v=i(X_z - X_{\bar{z}}).$ This implies
\[
X_u \times X_v = i(X_z + X_{\bar{z}})\times(X_z - X_{\bar{z}}) = 2i X_{\bar{z}}\times X_z, 
\]
where $\times$ means the usual vector product of $\R^3.$ On the other hand,
\[
\|X_u\times X_v\|_F=\sqrt{\|X_u\|_F^2\|X_v\|_F^2 - \lan X_u,X_v\ran_F^2} = 2\lan X_z,X_{\bar{z}}\ran_F = \alpha,
\]
where $\|Y\|^2_F = \lan Y,Y\ran_F, \ Y\in\R^3.$ Therefore, the unitary normal vector field of the immersion, with the canonical orientation, is given by
\begin{equation}\label{normal-00}
N_F = \dfrac{X_u\times X_v}{\|X_u\times X_v\|_F}=\frac{2i}{\alpha}X_{\bar{z}}\times X_z.
\end{equation}
We also have the following fundamental equations
\begin{equation}\label{codazzi}
\left\{
\begin{aligned}
\n_{X_z}X_z&=\frac{\alpha_z}{\alpha}X_z + PN_F\\
\n_{X_{\bar{z}}}X_z&=\frac{\alpha H}{2}N_F\\
\n_{X_{\bar{z}}}X_{\bar{z}}&= \frac{\alpha_{\bar{z}}}{\alpha}X_{\bar{z}} + \bar{P}N_F\\
\end{aligned}
\right.
\hspace{2cm}
\left\{
\begin{aligned}
\n_{X_z}N &=-HX_z - \frac{2P}{\alpha}X_{\bar{z}}\\
\n_{X_{\bar{z}}}N&= - \frac{2\bar{P}}{\alpha}X_z - HX_{\bar{z}}.\\
\end{aligned}
\right.
\end{equation}
Since 
\begin{equation}\label{hopf-diff-000}
\begin{aligned}
P&=\lan \n_{X_z}X_z,N\ran = \frac{1}{4}\lan \n_{X_u-iX_v}X_u-iX_v,N\ran\\
 &=\frac{1}{4}[\lan \n_{X_u}X_u,N\ran - \lan \n_{X_v}X_v,N\ran - i(\lan \n_{X_u}X_v,N\ran + \lan \n_{X_v}X_u,N\ran)]\\
 &=\frac{1}{4}[II(X_u,X_u) - II(X_v,X_v) - 2i II(X_u,X_v)],\\
\end{aligned}
\end{equation}
where $II$ is the second fundamental form of $\Sigma$ in $\R^3$, we have $P=0$ if and only if $II$ is umbilical. Moreover,
\[
\begin{aligned}
|P|^2&=\frac{1}{16}[(II(X_u,X_u) - II(X_v,X_v))^2 + 4II(X_u,X_v)^2]\\
     &=\frac{1}{16}[(II(X_u,X_u) + II(X_v,X_v))^2 - 4(II(X_u,X_u)II(X_v,X_v)-II(X_u,X_v)^2)]\\
     &=\frac{1}{16}[(\trace II)^2 - 4(\det II)].
\end{aligned}
\]
Since $H=\frac{1}{2}\trace II$ is the mean curvature and, by the Gauss equation $\det II = K - \overline{K}(T\Sigma),$
we have
\begin{equation}\label{hopf-norm}
|P|^2 =\frac{1}{4}(H^2-K + \overline{K}(T\Sigma)).
\end{equation}
Here $K$ is the Gaussian curvture of $\Sigma$ and $\overline{K}(T\Sigma)$ is the sectional curvature of $M_F^3$ relative to the two dimensional subspace $T\Sigma.$

In order to prove our main theorems, we shall need some computational lemmas.

\begin{lemma}\label{hopf-diff}
If $Pdz^2 = \lan \n_{X_z}X_z,N_F\ran_F$ be the Hopf differential of a conformal immersion $X:\Sigma\ria M^3_F,$ then
\[
P_{\bar{z}}=\frac{\alpha}{2}H_z + \lan\overline{R}(X_z,X_{\bar{z}})X_z,N\ran_F,
\]
where $\overline{R}$ is the curvature tensor of $M^3_F.$
\end{lemma}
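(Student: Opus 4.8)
The plan is to differentiate the defining identity $P=\lan\n_{X_z}X_z,N\ran_F$ in the $\bar z$ direction, and then reorganize everything by means of the structure equations \eqref{codazzi} and the curvature tensor $\overline{R}$. Since $P$, $X_z$ and $N$ are fields defined along the immersion and $\n$ is metric-compatible, the first move is simply
\[
P_{\bar z}=\lan \n_{X_{\bar z}}\n_{X_z}X_z,N\ran_F+\lan \n_{X_z}X_z,\n_{X_{\bar z}}N\ran_F .
\]
Everything then reduces to evaluating these two terms using \eqref{codazzi} together with the isotropy relations coming from the isothermal/conformal setup, namely $\lan X_z,X_z\ran_F=0$, $\lan X_z,X_{\bar z}\ran_F=\frac{\alpha}{2}$, $\lan X_z,N\ran_F=\lan X_{\bar z},N\ran_F=0$ and $\lan N,N\ran_F=1$.

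I would dispatch the second (easier) term first. Substituting $\n_{X_z}X_z=\frac{\alpha_z}{\alpha}X_z+PN$ and $\n_{X_{\bar z}}N=-\frac{2\bar P}{\alpha}X_z-HX_{\bar z}$ and expanding, the four resulting inner products all vanish by the isotropy relations except the cross term $\frac{\alpha_z}{\alpha}X_z$ against $-HX_{\bar z}$, which contributes $-\frac{H\alpha_z}{\alpha}\cdot\frac{\alpha}{2}$. Hence $\lan \n_{X_z}X_z,\n_{X_{\bar z}}N\ran_F=-\frac{H\alpha_z}{2}$. This is the term I expect to cancel against a matching piece arising from the first term.

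The first term is where the curvature enters. Because $z,\bar z$ are coordinate fields, $[X_z,X_{\bar z}]=0$, so under the sign convention $\overline{R}(X,Y)Z=\n_Y\n_X Z-\n_X\n_Y Z+\n_{[X,Y]}Z$ adopted here I may commute the derivatives,
\[
\n_{X_{\bar z}}\n_{X_z}X_z=\n_{X_z}\n_{X_{\bar z}}X_z+\overline{R}(X_z,X_{\bar z})X_z .
\]
For the remaining piece I use $\n_{X_{\bar z}}X_z=\frac{\alpha H}{2}N$ and then $\n_{X_z}N=-HX_z-\frac{2P}{\alpha}X_{\bar z}$; pairing with $N$ and using $\lan X_z,N\ran_F=\lan X_{\bar z},N\ran_F=0$, $\lan N,N\ran_F=1$ kills the tangential contributions and leaves only the $N$-component $\frac{(\alpha H)_z}{2}=\frac{\alpha_z H+\alpha H_z}{2}$. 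Collecting the two terms, the $\frac{\alpha_z H}{2}$ produced here cancels the $-\frac{\alpha_z H}{2}$ from the previous step, and I am left with $P_{\bar z}=\frac{\alpha}{2}H_z+\lan\overline{R}(X_z,X_{\bar z})X_z,N\ran_F$, as claimed.

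The computation is essentially routine once \eqref{codazzi} is in hand, so the only real obstacle is careful sign and ordering bookkeeping: the curvature term appears with a $+$ sign precisely for the stated convention on $\overline{R}$, and the opposite convention would flip it. As a consistency check I would note that in a space form, where $\overline{R}(X,Y)Z=c(\lan Y,Z\ran X-\lan X,Z\ran Z)$, the curvature term vanishes because $\lan X_z,X_z\ran_F=0$ and $\lan X_z,N\ran_F=0$, so that $P_{\bar z}=\frac{\alpha}{2}H_z$; in particular $P$ is holomorphic when $H$ is constant, recovering the classical basis of Hopf's theorem.
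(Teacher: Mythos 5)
Your proof is correct and follows essentially the same route as the paper: differentiate $P$ using metric compatibility, commute $\n_{X_{\bar z}}\n_{X_z}$ via the curvature tensor with exactly the sign convention the paper uses, and evaluate the remaining terms with the structure equations \eqref{codazzi}, after which the $\tfrac{\alpha_z H}{2}$ contributions cancel. The only cosmetic difference is that you substitute $\n_{X_{\bar z}}X_z=\tfrac{\alpha H}{2}N_F$ before differentiating in the $z$-direction, whereas the paper applies metric compatibility once more and substitutes afterwards; the computations are identical.
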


\begin{proof}
We have
\[
\begin{aligned}
P_{\bar{z}}&= \frac{\partial}{\partial \bar{z}}\lan \n_{X_z}X_z,N_F\ran_F\\
& = \lan\n_{X_{\bar{z}}}\n_{X_z}X_z,N_F\ran_F + \lan\n_{X_z}X_z,\n_{X_{\bar{z}}}N_F\ran_F\\
&=\lan\bar{R}(X_z,X_{\bar{z}})X_z,N_F\ran_F + \lan\n_{X_z}\n_{X_{\bar{z}}}X_z,N_F\ran_F + \lan\n_{X_z}X_z,\n_{X_{\bar{z}}}N_F\ran_F\\
&=\lan\bar{R}(X_z,X_{\bar{z}})X_z,N_F\ran_F + \frac{\partial}{\partial \bar{z}}\left(\lan\n_{X_{\bar{z}}}X_z ,N_F\ran_F\right) - \lan\n_{X_{\bar{z}}}X_z,\n_{X_z}N\ran_F + \lan\n_{X_z}X_z,\n_{X_{\bar{z}}}N\ran_F\\
&=\lan\bar{R}(X_z,X_{\bar{z}})X_z,N_F\ran_F + \frac{\partial}{\partial z}\left(\frac{\alpha H}{2}\right)- \left\lan\frac{\alpha H}{2}N_F, - HX_z - \frac{2P}{\alpha}X_{\bar{z}}\right\ran_F\\
&\qquad + \left\lan\frac{\alpha_z}{\alpha}X_z + PN,-\frac{2\overline{P}}{\alpha}X_z - HX_{\bar{z}}\right\ran_F\\
&=\lan\bar{R}(X_z,X_{\bar{z}})X_z,N_F\ran_F + \frac{\alpha H_z}{2}.\\
\end{aligned}
\]
\end{proof}

In order to calculate the expression for $\lan\bar{R}(X_z,X_{\bar{z}})X_z,N_F\ran_F,$ we will use the following result, whose proof can be found in \cite{Eisen}, p. 98. Here we use the expression as it is written in the classical work of Kulkarni \cite{Kulk}, Proposition 2.2, p. 318. 

\begin{lemma}\label{kulk}
Let $(M,g)$ be a Riemannian manifold and $\bar{g}=e^{2\phi}g$ be a conformal metric. If $R$ and $\bar{R}$ denote the curvature tensors of $M$ with metrics $g$ and $\bar{g},$ respectively, then
\[
\begin{aligned}
\bar{R}(X,Y)Z&=R(X,Y)Z + [\hess\phi(Y,Z)-Y\phi Z\phi + \lan Y,Z\ran\|\n \phi\|^2]X\\
&-[\hess\phi(X,Z)-X\phi Z\phi + \lan X,Z\ran\|\n \phi\|^2]Y\\
&+\lan Y,Z\ran[\n_X\n\phi - (X\phi)\n\phi] - \lan X,Z\ran[\n_Y\n\phi - (Y\phi)\n\phi].
\end{aligned}
\]
Here, $\hess \phi,$ and $\n$ are, respectively, the Hessian and the connection (and the gradient) relative to the metric $g.$
\end{lemma}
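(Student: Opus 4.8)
The plan is to reduce the identity to the transformation law for the Levi--Civita connection under a conformal change and then to the definition of the curvature tensor, being careful to use the same convention $R(X,Y)Z=\n_Y\n_X Z-\n_X\n_Y Z+\n_{[X,Y]}Z$ that is implicitly fixed in the proof of Lemma~\ref{hopf-diff}. First I would record the conformal change of the connection: writing $\bar g=e^{2\phi}g$, the difference tensor $D(X,Y):=\bar\n_X Y-\n_X Y$ is
\[
D(X,Y)=(X\phi)Y+(Y\phi)X-\lan X,Y\ran\,\n\phi,
\]
where $\n\phi$ is the $g$-gradient. This is obtained from the Koszul formula for $\bar g$, or simply by checking that the right-hand side defines a symmetric (hence torsion-free) tensor which, added to $\n$, produces a connection compatible with $\bar g$; uniqueness of the Levi--Civita connection then forces the equality.

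Since both sides of the stated identity are tensorial in $X,Y,Z$, it suffices to verify it at an arbitrary point $p$, and I would choose $X,Y,Z$ to be $g$-parallel at $p$, so that all first covariant derivatives of $X,Y,Z$ vanish at $p$ and $[X,Y]=0$ there. With this choice, since $\bar R(X,Y)Z=\bar\n_Y\bar\n_X Z-\bar\n_X\bar\n_Y Z$ at $p$, substituting $\bar\n=\n+D$ and expanding yields, at $p$, three groups of terms:
\[
\bar R(X,Y)Z=\big[\n_Y\n_X Z-\n_X\n_Y Z\big]+\big[\n_Y D(X,Z)-\n_X D(Y,Z)\big]+\big[D(Y,D(X,Z))-D(X,D(Y,Z))\big].
\]
The first bracket is exactly $R(X,Y)Z$. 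For the second bracket I would use that, at $p$, $\n_X D(Y,Z)=\hess\phi(X,Y)Z+\hess\phi(X,Z)Y-\lan Y,Z\ran\,\n_X\n\phi$ (using $\hess\phi(X,Y)=X(Y\phi)$ and $X\lan Y,Z\ran=0$ at $p$); antisymmetrizing in $X,Y$ kills the $Z$-terms by symmetry of the Hessian and leaves $\hess\phi(Y,Z)X-\hess\phi(X,Z)Y+\lan Y,Z\ran\n_X\n\phi-\lan X,Z\ran\n_Y\n\phi$.

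The only genuinely laborious step is the third bracket, which is purely algebraic (no derivatives): expanding $D(X,D(Y,Z))$ with $W=D(Y,Z)$, and using $W\phi=2(Y\phi)(Z\phi)-\lan Y,Z\ran\|\n\phi\|^2$ together with $\lan X,W\ran=(Y\phi)\lan X,Z\ran+(Z\phi)\lan X,Y\ran-\lan Y,Z\ran(X\phi)$, then antisymmetrizing, produces $[-(Y\phi)(Z\phi)+\lan Y,Z\ran\|\n\phi\|^2]X$, the corresponding $Y$-term obtained by the $X\leftrightarrow Y$ exchange, and the residual gradient terms $-\lan Y,Z\ran(X\phi)\n\phi+\lan X,Z\ran(Y\phi)\n\phi$. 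Collecting the three brackets and grouping the $\n_X\n\phi$ and $(X\phi)\n\phi$ contributions into $\lan Y,Z\ran[\n_X\n\phi-(X\phi)\n\phi]$ (and symmetrically in $Y$) reproduces the claimed formula. The main obstacle is precisely this bookkeeping in the quadratic bracket: one must keep the coefficients of $X$, $Y$, and $\n\phi$ separate and track the cancellations carefully, since a single sign slip in the curvature convention would propagate into the wrong global sign of the Hessian and $\n\n\phi$ terms.
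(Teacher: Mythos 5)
Your proof is correct, but it cannot be ``essentially the same as the paper's'' because the paper offers no proof of this statement: Lemma~\ref{kulk} is quoted as a classical fact, with the proof delegated to Eisenhart (p.~98) and the expression taken as written in Kulkarni (Proposition 2.2). The cited classical argument is a local-coordinate computation comparing the Christoffel symbols of $g$ and $e^{2\phi}g$; yours is the invariant version of that computation: the difference tensor $D(X,Y)=(X\phi)Y+(Y\phi)X-\lan X,Y\ran\n\phi$ obtained from uniqueness of the Levi--Civita connection, evaluation at a point using $g$-parallel extensions (so $[X,Y]=0$ and all first covariant derivatives of $X,Y,Z$ vanish there), and the split of $\bar{R}(X,Y)Z$ into $R(X,Y)Z$, the antisymmetrized derivative term $\n_Y D(X,Z)-\n_X D(Y,Z)$, and the antisymmetrized quadratic term $D(Y,D(X,Z))-D(X,D(Y,Z))$. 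I have checked both nontrivial brackets: the derivative bracket reduces, as you say, to $\hess\phi(Y,Z)X-\hess\phi(X,Z)Y+\lan Y,Z\ran\n_X\n\phi-\lan X,Z\ran\n_Y\n\phi$, the quadratic bracket to $[-(Y\phi)(Z\phi)+\lan Y,Z\ran\|\n\phi\|^2]X+[(X\phi)(Z\phi)-\lan X,Z\ran\|\n\phi\|^2]Y-\lan Y,Z\ran(X\phi)\n\phi+\lan X,Z\ran(Y\phi)\n\phi$, and their sum with $R(X,Y)Z$ is exactly the displayed formula. Your attention to the sign convention is also warranted and correctly resolved: the formula as stated requires $R(X,Y)Z=\n_Y\n_X Z-\n_X\n_Y Z+\n_{[X,Y]}Z$, which is precisely the convention the paper uses implicitly in the proof of Lemma~\ref{hopf-diff} (where $\lan\n_{X_{\bar{z}}}\n_{X_z}X_z,N_F\ran_F$ is expanded as $\lan\bar{R}(X_z,X_{\bar{z}})X_z,N_F\ran_F+\lan\n_{X_z}\n_{X_{\bar{z}}}X_z,N_F\ran_F$); a quick sanity check such as $\phi=-\log x_n$ on the Euclidean upper half-space, which your formula turns into constant sectional curvature $-1$, confirms the signs. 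What the paper's citation buys is brevity; what your argument buys is a self-contained, coordinate-free proof whose conventions are verifiably consistent with the rest of the paper --- a consistency that the bare reference to Kulkarni, who fixes his own conventions, leaves for the reader to check.
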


\begin{lemma}\label{hess-NF}
If $X:(\Sigma,\alpha(z)|dz|^2)\ria M_F^3$ be a conformal immersion with normal vector $N_F$, then
\[
\lan\bar{R}(X_z,X_{\bar{z}})X_z,N_F\ran_F = -\frac{\alpha}{2F}\hess F (X_z,N_F),
\]
where $\hess F$ is the Euclidean hessian of $F.$
\end{lemma}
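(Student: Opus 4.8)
The plan is to apply Lemma \ref{kulk} with $g$ equal to the flat Euclidean metric $dx_1^2+dx_2^2+dx_3^2$ of $\R^3$ and $\bar g=\lan\cdot,\cdot\ran_F=F^{-2}g$. Writing $\bar g=e^{2\phi}g$ forces $\phi=-\log F$, and since the Euclidean metric is flat its curvature tensor $R$ vanishes identically. Thus the formula of Lemma \ref{kulk} collapses to its last four terms, in which $\hess\phi$, $\n$, $\lan\cdot,\cdot\ran$ and $\|\cdot\|$ are all understood in the Euclidean metric; throughout I write $\lan\cdot,\cdot\ran$ (no subscript) for that metric and keep $\lan\cdot,\cdot\ran_F$ for the conformal one.

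First I would pair $\bar R(X_z,X_{\bar z})X_z$ with $N_F$ using $\lan\cdot,\cdot\ran_F$ and discard the vanishing contributions. With $X=X_z$, $Y=X_{\bar z}$, $Z=X_z$, the first term of Lemma \ref{kulk} is a multiple of $X_z$ and the second a multiple of $X_{\bar z}$; since $N_F$ is normal to the immersion, $\lan X_z,N_F\ran_F=\lan X_{\bar z},N_F\ran_F=0$ and both disappear. The fourth term carries the scalar factor $\lan X_z,X_z\ran$, which is zero: conformality of the parametrization gives $\lan X_z,X_z\ran_F=0$, and the two metrics are proportional. Hence only the third term survives, namely
\[
\lan\bar R(X_z,X_{\bar z})X_z,N_F\ran_F=\lan X_{\bar z},X_z\ran\,\lan \n_{X_z}\n\phi-(X_z\phi)\n\phi,\,N_F\ran_F,
\]
where the outer coefficient $\lan X_{\bar z},X_z\ran$ is Euclidean.

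It then remains to evaluate this expression. From $\lan X_z,X_{\bar z}\ran_F=\alpha/2$ and $\lan\cdot,\cdot\ran_F=F^{-2}\lan\cdot,\cdot\ran$ one gets the Euclidean value $\lan X_{\bar z},X_z\ran=F^2\alpha/2$. Writing $VF$ for the Euclidean directional derivative of $F$, the relation $\phi=-\log F$ yields $\phi_i=-F_i/F$ and the Euclidean Hessian identity $\hess\phi(V,W)=-F^{-1}\hess F(V,W)+F^{-2}(VF)(WF)$, together with $\lan\n_{X_z}\n\phi,N_F\ran_F=F^{-2}\hess\phi(X_z,N_F)$ and $(X_z\phi)\lan\n\phi,N_F\ran_F=F^{-4}(X_zF)(N_FF)$. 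Substituting these, the two quadratic terms $(X_zF)(N_FF)$ cancel, the bracket collapses to $-F^{-3}\hess F(X_z,N_F)$, and multiplication by $F^2\alpha/2$ produces exactly $-\frac{\alpha}{2F}\hess F(X_z,N_F)$, as claimed.

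The only real obstacle is bookkeeping between the two metrics: Lemma \ref{kulk} is phrased entirely in the background Euclidean metric, whereas the quantity to be computed is paired in $\lan\cdot,\cdot\ran_F$, so every factor of $F^{\pm2}$ must be tracked through the gradient, the Hessian, and the coefficient $\lan X_{\bar z},X_z\ran$. The clean cancellation of the $(X_zF)(N_FF)$ terms is precisely the check that this bookkeeping has been carried out correctly.
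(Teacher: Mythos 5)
Your proposal is correct and takes essentially the same route as the paper: both arguments apply Lemma \ref{kulk} with $\phi=-\log F$ over the flat Euclidean background, discard terms using normality of $N_F$ and conformality of the parametrization, and conclude via the identity $\hess\phi(V,W)=-F^{-1}\hess F(V,W)+F^{-2}(VF)(WF)$, which produces the same cancellation of the $(X_zF)(N_FF)$ terms. The only difference is organizational --- you feed the complexified vectors $X_z,X_{\bar z}$ directly into Kulkarni's formula (implicitly using its bilinear extension), whereas the paper first expands into the real vectors $X_u,X_v$, applies the formula there, and recombines --- and your tracking of the $F^{\pm 2}$ factors checks out.
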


\begin{proof}
First, notice that
\begin{equation}\label{k-1}
\begin{aligned}
\lan\bar{R}(X_z,X_{\bar{z}})X_z,N_F\ran_F &= \frac{1}{8}\lan\bar{R}(X_u-iX_v,X_u+iX_v)(X_u-iX_v),N_F\ran_F \\
&=\frac{i}{4}\lan\bar{R}(X_u,X_v)X_u,N_F\ran_F + \frac{1}{4}\lan\bar{R}(X_u,X_v)X_v,N_F\ran_F.
\end{aligned}
\end{equation}
By using Lemma \ref{kulk} for $\phi=-\log F$, we have
\begin{equation}\label{k-2}
\begin{aligned}
\lan\bar{R}(X_u,X_v)X_u,N_F\ran_F&=\lan X_v,X_u\ran[\lan \n_{X_u}\n\phi,N_F\ran_F - (X_u\phi)\lan \n\phi, N_F\ran_F]\\
&\quad - \lan X_u,X_u\ran[\lan\n_{X_v}\n\phi, N_F\ran_F - (X_v\phi)\lan\n\phi,N_F\ran_F]\\
&=- \lan X_u,X_u\ran_F[\lan\n_{X_v}\n\phi, N_F\ran - (X_v\phi)\lan\n\phi,N_F\ran]\\
&=- \lan X_u,X_u\ran_F[\hess\phi(X_v,N_F) - \lan\n\phi,X_v\ran\lan\n\phi,N_F\ran]\\
&=- \alpha[\hess\phi(X_v,N_F) - \lan\n\phi,X_v\ran\lan\n\phi,N_F\ran].\\
\end{aligned}
\end{equation}
Analogously,
\begin{equation}\label{k-3}
\lan\bar{R}(X_u,X_v)X_u,N_F\ran_F= \alpha[\hess\phi(X_u,N_F) - \lan\n\phi,X_u\ran\lan\n\phi,N_F\ran].
\end{equation}
Replacing \eqref{k-2} and \eqref{k-3} in \eqref{k-1}, gives
\[
\lan\bar{R}(X_z,X_{\bar{z}})X_z,N_F\ran_F = \frac{\alpha}{2}[\hess\phi(X_z,N_F) - \lan\n\phi,X_z\ran\lan\n\phi,N_F\ran].
\]
On the other hand, since
\[
\n\phi =-\frac{\n F}{F} \ \mbox{and} \ \hess\phi (U,V)= -\frac{1}{F}\hess F(U,V) + \frac{1}{F^2}\lan\n F,U\ran\lan\n F,V\ran,
\]
we obtain
\[
\lan\bar{R}(X_z,X_{\bar{z}})X_z,N_F\ran_F=-\frac{\alpha}{2F}\hess F(X_z,N_F).
\]
\end{proof}

\begin{lemma}\label{angle-f}
If $X:(\Sigma,\alpha(z)|dz|^2)\ria M_F^3$ be a conformal immersion, then
\begin{equation}\label{angle-f-eq}
\frac{4}{\alpha (F(X))^2}\|X\|_z\|X\|_{\bar{z}} + \left(\frac{X}{\|X\|}\cdot N\right)^2=1,
\end{equation}

where $\|X\|$ denotes the Euclidean norm of $X.$
\end{lemma}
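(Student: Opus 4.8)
The plan is to read \eqref{angle-f-eq} as the Euclidean Pythagorean relation for the unit position vector $X/\|X\|$, split into its components tangent and normal to $\Sigma$. First I would record the derivatives of the Euclidean norm. Since $\|X\|^2 = X\cdot X$ with $\cdot$ the Euclidean inner product and the coordinate functions real, differentiation gives $\|X\|_z = (X\cdot X_z)/\|X\|$ and $\|X\|_{\bar z}=(X\cdot X_{\bar z})/\|X\|$, and because $\|X\|$ is real we have $\|X\|_{\bar z}=\overline{\|X\|_z}$, so that
\[
\frac{4}{\alpha (F(X))^2}\|X\|_z\|X\|_{\bar z}=\frac{4}{\alpha (F(X))^2}\frac{(X\cdot X_z)(X\cdot X_{\bar z})}{\|X\|^2}\ge 0.
\]

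Next I would rewrite the conformality of $X$ in the flat metric. From $\lan Y,W\ran_F=(F(X))^{-2}\,Y\cdot W$ and the isothermal relations $\lan X_z,X_z\ran_F=0$, $\lan X_z,X_{\bar z}\ran_F=\alpha/2$, I get the Euclidean identities $X_z\cdot X_z=0$ and $X_z\cdot X_{\bar z}=\alpha (F(X))^2/2$; thus $\{X_z,X_{\bar z}\}$ is a null basis of the Euclidean tangent plane. Writing the Euclidean orthogonal projection of $X$ onto this plane as $X^{\top}=aX_z+bX_{\bar z}$ and imposing $(X-X^{\top})\cdot X_z=(X-X^{\top})\cdot X_{\bar z}=0$, I solve $a=2(X\cdot X_{\bar z})/(\alpha (F(X))^2)$ and $b=2(X\cdot X_z)/(\alpha (F(X))^2)$ (note $a=\bar b$, so $X^{\top}$ is real). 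Using $X_z\cdot X_z=X_{\bar z}\cdot X_{\bar z}=0$ this yields
\[
\|X^{\top}\|^2 = 2ab\,(X_z\cdot X_{\bar z})=\frac{4(X\cdot X_z)(X\cdot X_{\bar z})}{\alpha (F(X))^2},
\]
so the first term on the left of \eqref{angle-f-eq} equals $\|X^{\top}\|^2/\|X\|^2$, the squared length of the tangential part of the unit vector $X/\|X\|$.

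Finally I would conclude by orthogonality. Decomposing $X/\|X\|$ into its tangential and normal parts, the normal part has squared length $\big((X/\|X\|)\cdot N\big)^2$ with $N$ the unit normal, and $\big\|X/\|X\|\big\|^2=1$ gives
\[
\frac{\|X^{\top}\|^2}{\|X\|^2}+\left(\frac{X}{\|X\|}\cdot N\right)^2=1,
\]
which is precisely \eqref{angle-f-eq}. The only delicate points are the projection onto the null basis $\{X_z,X_{\bar z}\}$ and the careful bookkeeping of the conformal factor $(F(X))^2$ that links the Euclidean and the $F$-inner products; once the conformality of $X$ is expressed in the flat metric, the statement reduces to the Pythagorean theorem. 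I would also note, for later use, that in the warped-product realization the vector $X/\|X\|$ is the radial direction, so $(X/\|X\|)\cdot N$ is the angle function $\nu$ and \eqref{angle-f-eq} takes the equivalent form $1-\nu^2=\frac{4}{\alpha (F(X))^2}\|X\|_z\|X\|_{\bar z}$.
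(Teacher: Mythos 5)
Your proof is correct and follows essentially the same route as the paper's: the paper also expands $X$ in the frame $\{X_z,X_{\bar z},N\}$, uses the Euclidean null relations $X_z\cdot X_z=X_{\bar z}\cdot X_{\bar z}=0$ and $X_z\cdot X_{\bar z}=\alpha (F(X))^2/2$ coming from conformality to identify the same coefficients, and then computes $\|X\|^2=X\cdot X$, which is exactly your Pythagorean decomposition into $X^{\top}$ and $(X\cdot N)N$. The only difference is presentational bookkeeping, so there is nothing to add.
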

\begin{proof}
Considering the frame $\{X_z,X_{\bar{z}},N\}$ in $\R^3,$ we can write
\[
X=aX_z +bX_{\bar{z}} +cN,
\]
for smooth functions $a,b,c:\Sigma\ria\R.$ Since $X_z\cdot X_z = X_{\bar{z}}\cdot X_{\bar{z}}=0$ implies
\[
\begin{aligned}
X\cdot X_z &= a (X_z\cdot X_z) +  b (X_{\bar{z}}\cdot X_z) + c (N\cdot X_z)\\
&= b(F(X))^2\lan X_{\bar{z}},X_z\ran_F = b(F(X))^2\frac{\alpha}{2},
\end{aligned}
\]
and analogously for $X\cdot X_{\bar{z}}$ and $X\cdot N,$ we have
\[
X= \frac{2}{\alpha (F(X))^2}(X\cdot X_{\bar{z}})X_z +  \frac{2}{\alpha (F(X))^2}(X\cdot X_z)X_{\bar{z}} + (X\cdot N)N.
\]
This implies 
\[
\begin{aligned}
\|X\|^2 = X\cdot X &= \frac{8}{\alpha^2(F(X))^4}(X\cdot X_z)(X\cdot X_{\bar{z}})(X_{\bar{z}}\cdot X_z) + (X\cdot N)^2\\
&\frac{8}{\alpha^2(F(X))^4}(X\cdot X_z)(X\cdot X_{\bar{z}})\frac{\alpha (F(X))^2}{2} + (X\cdot N)^2\\
&\frac{4}{\alpha^2(F(X))^2}(X\cdot X_z)(X\cdot X_{\bar{z}}) + (X\cdot N)^2.\\
\end{aligned}
\]
By using $X\cdot X_z = \|X\|\|X\|_z$ and $X\cdot X_{\bar{z}}=\|X\|\|X\|_{\bar{z}},$ we obtain the result dividing the expression above by $\|X\|^2.$
\end{proof}

\begin{lemma}\label{appendix}
Let $X:(\Sigma,\alpha(z)|dz|^2)\ria M_F^3$ be a conformal immersion. If $\overline{K}(T\Sigma)$ denotes the sectional curvature of $M_F^3$ relative to the plane $dX(T\Sigma),$ then
\[
\overline{K}(T\Sigma) = -\|\n F\|^2 + \dfrac{4}{\alpha(z)F(X(z))}\hess F(X_z,X_{\bar{z}}),
\]
where $\|\n F\|^2=\left(\frac{\partial F}{\partial x_1}(X(z))\right)^2+\left(\frac{\partial F}{\partial x_2}(X(z))\right)^2+\left(\frac{\partial F}{\partial x_3}(X(z))\right)^2$ and $\hess F$ denotes the Euclidian hessian of $F.$
\end{lemma}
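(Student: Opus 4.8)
The plan is to evaluate the sectional curvature directly on the conformal frame $\{X_u,X_v\}$ and then translate into the complex frame $\{X_z,X_{\bar z}\}$, in the same spirit as the proof of Lemma \ref{hess-NF}. Since $u,v$ are isothermal, $X_u$ and $X_v$ are $\lan\cdot,\cdot\ran_F$-orthogonal with $\|X_u\|_F^2=\|X_v\|_F^2=\alpha$, so the plane $dX(T\Sigma)$ has
\[
\overline{K}(T\Sigma)=\frac{\lan\bar R(X_u,X_v)X_u,X_v\ran_F}{\|X_u\|_F^2\|X_v\|_F^2-\lan X_u,X_v\ran_F^2}=\frac{1}{\alpha^2}\lan\bar R(X_u,X_v)X_u,X_v\ran_F,
\]
where the ordering of the arguments is the one compatible with the curvature convention fixed in Lemma \ref{hopf-diff} (the same $\bar R$ used throughout). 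I will keep in hand the Euclidean relations $X_u\cdot X_v=0$ and $X_u\cdot X_u=X_v\cdot X_v=F^2\alpha$, which follow from conformality together with the fact that $\lan\cdot,\cdot\ran_F$ is $F^{-2}$ times the Euclidean inner product, since Lemma \ref{kulk} is stated with respect to the flat background metric $g$.

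Next I would apply Lemma \ref{kulk} with $\phi=-\log F$, noting that the background curvature $R$ vanishes because $g$ is Euclidean. Expanding $\bar R(X_u,X_v)X_u$ and pairing with $X_v$ in $\lan\cdot,\cdot\ran_F$, every term carrying a factor $X_u\cdot X_v$ drops out by orthogonality, and the surviving terms combine to
\[
\lan\bar R(X_u,X_v)X_u,X_v\ran_F=-\alpha\big[\hess\phi(X_u,X_u)+\hess\phi(X_v,X_v)-(X_u\phi)^2-(X_v\phi)^2+F^2\alpha\|\n\phi\|^2\big].
\]
I then convert to the complex frame using $X_u=X_z+X_{\bar z}$ and $X_v=i(X_z-X_{\bar z})$, which give the pointwise identities $\hess\phi(X_u,X_u)+\hess\phi(X_v,X_v)=4\hess\phi(X_z,X_{\bar z})$ and $(X_u\phi)^2+(X_v\phi)^2=4(X_z\phi)(X_{\bar z}\phi)$, so that
\[
\overline{K}(T\Sigma)=-\frac{4}{\alpha}\big[\hess\phi(X_z,X_{\bar z})-(X_z\phi)(X_{\bar z}\phi)\big]-F^2\|\n\phi\|^2.
\]

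Finally I substitute $\phi=-\log F$. From $\n\phi=-\n F/F$ one gets $F^2\|\n\phi\|^2=\|\n F\|^2$, while the identity $\hess\phi(U,V)=-\tfrac{1}{F}\hess F(U,V)+\tfrac{1}{F^2}\lan\n F,U\ran\lan\n F,V\ran$ already recorded in the proof of Lemma \ref{hess-NF}, together with $X_z\phi=-\lan\n F,X_z\ran/F$, makes the quadratic first-order terms cancel: $\hess\phi(X_z,X_{\bar z})-(X_z\phi)(X_{\bar z}\phi)=-\tfrac{1}{F}\hess F(X_z,X_{\bar z})$. Substituting yields exactly $\overline{K}(T\Sigma)=-\|\n F\|^2+\tfrac{4}{\alpha F}\hess F(X_z,X_{\bar z})$. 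The main obstacle I anticipate is bookkeeping rather than ideas: correctly fixing the placement of the arguments in the sectional-curvature formula so that it agrees with the sign convention for $\bar R$ used in Lemmas \ref{hopf-diff} and \ref{hess-NF} (an error here flips the overall sign), and tracking the many Kulkarni terms until the pleasant cancellation of the $\lan\n F,\cdot\ran$ quadratic contributions.
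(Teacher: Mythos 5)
Your proposal is correct and follows essentially the same route as the paper: both reduce $\overline{K}(T\Sigma)$ to $\frac{1}{\alpha^2}\lan\bar R(X_u,X_v)X_u,X_v\ran_F$ in the real isothermal frame, apply Lemma \ref{kulk} with $\phi=-\log F$ on the flat background, and use the conversions $\hess\phi(X_u,X_u)+\hess\phi(X_v,X_v)=4\hess\phi(X_z,X_{\bar z})$ and the cancellation $\hess\phi - (d\phi)^2$-terms $=-\frac{1}{F}\hess F$ to conclude. The only cosmetic difference is that the paper first writes the sectional curvature in the complex frame as $-\frac{4}{\alpha^2}\lan\bar R(X_z,X_{\bar z})X_z,X_{\bar z}\ran_F$ and then passes to the real frame, whereas you start in the real frame directly; your care with the sign convention for $\bar R$ is exactly the right concern, and your choice matches the paper's.
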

\begin{proof}

Considering the frame $\{X_z,X_{\bar{z}}\}$ in $T\Sigma,$ we have
\[
\overline{K}(T\Sigma)=\frac{\lan\overline{R}(X_z,X_{\bar{z}})X_z,X_{\bar{z}}\ran_F}{\lan X_z,X_z\ran_F\lan X_{\bar{z}},X_{\bar{z}}\ran_F - \lan X_{\bar{z}},X_z\ran_F^2}=-\frac{\lan\overline{R}(X_z,X_{\bar{z}})X_z,X_{\bar{z}}\ran_F}{\lan X_{\bar{z}},X_z\ran_F^2},
\]
i.e.,
\begin{equation}\label{app-2}
\overline{K}(T\Sigma) = -\frac{4}{\alpha(z)^2}\lan\overline{R}(X_z,X_{\bar{z}})X_z,X_{\bar{z}}\ran_F.
\end{equation}
Since
\begin{equation}\label{app-3}
\begin{aligned}
\lan\overline{R}(X_z,X_{\bar{z}})X_z,X_{\bar{z}}\ran_F&=\frac{1}{16}\lan\overline{R}(X_u-iX_v,X_u+iX_v)(X_u-iX_v),X_u+iX_v\ran_F\\
&=-\frac{1}{4}\lan\overline{R}(X_u,X_v)X_u,X_v\ran_F,
\end{aligned}
\end{equation}
by using Lemma \ref{kulk} for $\phi=-\log F$, we obtain
\[
\begin{aligned}
\lan\overline{R}(X_u,X_v)X_u,X_v\ran_F&=-[\hess\phi (X_u,X_u) - \lan X_u,\n \phi\ran^2 + \lan X_u,X_u\ran|\n \phi|^2]\lan X_v,X_v\ran_F\\
&\quad - \lan X_u,X_u\ran[\lan\n_{X_v}\n\phi,X_v\ran_F - \lan X_u,\n\phi\ran\lan\n\phi,X_v\ran_F]\\
&=-\alpha[\hess\phi (X_u,X_u) - \lan X_u,\n \phi\ran^2 + \lan X_u,X_u\ran|\n \phi|^2]\\
&\quad -\alpha[\hess\phi (X_v,X_v) - \lan X_v,\n \phi\ran^2].
\end{aligned}
\]
Since
\[
\n\phi = \frac{1}{F}\n F,\ \hess \phi (U,V)=-\frac{1}{F}\hess F(U,V) + \frac{1}{F^2}\lan\n F,U\ran\lan\n F,V\ran
\]
and $\lan X_u,X_u\ran = F^2\lan X_u,X_u\ran_F=F^2\alpha,$ we have
\[
\lan\overline{R}(X_u,X_v)X_u,X_v\ran_F=-\alpha\left[-\frac{1}{F}\left(\hess F(X_u,X_u)+\hess F(X_v,X_v))\right) + \alpha\|\n F\|^2\right].
\]
On the other hand,
\[
\hess F(X_z,X_{\bar{z}}) = \frac{1}{4}\hess F(X_u-iX_v,X_u+iX_v) = \frac{1}{4}[\hess F(X_u,X_u) + \hess F(X_v,X_v)],
\]
which gives
\begin{equation}\label{app-4}
\lan\overline{R}(X_u,X_v)X_u,X_v\ran_F = - \alpha^2\|\n F\|^2 + \frac{4\alpha}{F}\hess F(X_z,X_{\bar{z}}).
\end{equation}
The result then comes by replacing \eqref{app-4} in \eqref{app-3} and then in \eqref{app-2}.
\end{proof}


%

\section{Proof of the main result}


Warped product manifolds can be seen as conformally flat Riemannian manifolds with radial weight, as follows. By taking the spherical coordinates in $\R^3,$ we obtain
\[
dx_1^2+dx_2^2+dx_3^2 = dr^2 + r^2d\om^2,
\]
where $r=\sqrt{x_1^2+x_2^2+x_3^2}$ and $d\om^2$ is the canonical metric of the round sphere $\s^2.$ Let 
\[
A(r_0,r_1)=\{(x_1,x_2,x_3)\in\R^3;r_0^2\leq x_1^2+x_2^2+x_3^2\leq r_1^2\}.
\] 
If $F:A(r_0,r_1)\subset\R^3\ria\R$ is a radial function, then there exists a positive real function $u:(r_0,r_1)\subset\R\ria\R$ such that $F(x_1,x_2,x_3)=u(r).$ In this case, we have
\[
\lan\cdot,\cdot\ran_F = \dfrac{1}{u(r)^2}(dr^2+r^2d\om^2).
\]
Define $G:(r_0,r_1)\ria \R$ by $G'(r)=1/u(r).$ Since $G'(r)>0,$ we have that the function $G$ is invertible. Let $G^{-1}:I\subset \R\ria (r_0,r_1)$ be the inverse function of $G,$ where $I=G((r_0,r_1)),$ and denote by $t=G(r).$ Defining
\[
h(t)=\frac{G^{-1}(t)}{u(G^{-1}(t))},
\]
we have
\begin{equation}\label{change}
\dfrac{dr}{u(r)}=dt \ \mbox{and} \ \dfrac{r}{u(r)}=h(t).
\end{equation}
The metric $\lan\cdot,\cdot\ran_F$ thus becomes the warped metric
\begin{equation}\label{warped-11}
\lan\cdot,\cdot\ran = dt^2 + h(t)^2d\om^2,
\end{equation}
where $h:I\subset\R\ria\R$ is a smooth function, called warping function, and $M^3_F$ can be seen as the product $M^3=I\times\s^2$ with the metric (\ref{warped-11}), where $I\subset\R$ is an interval.

To prove our main theorem, we will need the following Lemma, which is an adaptation of a result due to Eschenburg and Tribuzy, see \cite{E-T} (see also the main Lemma of \cite{A-dC-T}):

\begin{lemma}[Eschenburg-Tribuzy, \cite{E-T}]\label{E-T}
Let $Q:U\subset\mathbb{C}\to\mathbb{C}$ be a complex function defined in an open set $U$ of the complex plane. Assume that
\[
|Q_{\bar{z}}| \leq f(z)|Q(z)|
\]
where $f\in L^p,$ $p>2,$ is a continuous, non-negative real function. Assume further that $z=z_0\in U$ is a zero of $Q$. Then either $Q\equiv 0$ in a neighborhood $V\subset U$ of $z_0$ or
\[
Q(z)=(z-z_0)^k Q_k(z), \ z\in V, \ k\geq 1,
\]
where $Q_k(z)$ is a continuous function with $Q_k(z_0)\neq0$.
\end{lemma}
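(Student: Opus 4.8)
The plan is to prove this by the \emph{similarity principle} of Bers and Vekua: a function obeying the Carleman-type inequality $|Q_{\bar z}|\le f|Q|$ can be written as $Q=e^{s}g$ with $g$ holomorphic and $s$ continuous, so that the local zero structure of $Q$ is inherited from that of the holomorphic factor $g$.

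First I would convert the differential inequality into an exact $\bar\partial$-equation. Set
\[
\omega(z)=
\begin{cases}
Q_{\bar z}(z)/Q(z), & Q(z)\neq 0,\\
0, & Q(z)=0.
\end{cases}
\]
At a point where $Q$ vanishes the hypothesis forces $|Q_{\bar z}|\le f|Q|=0$, so $Q_{\bar z}=0$ there; hence the identity $Q_{\bar z}=\omega Q$ holds throughout $U$, and the pointwise bound $|\omega|\le f$ shows $\omega\in L^p$ locally, with $p>2$.

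Next I would solve $s_{\bar z}=\omega$ on a small disk $V$ centered at $z_0$ by means of the Pompeiu (Cauchy transform) operator
\[
s(z)=-\frac{1}{\pi}\iint_{V}\frac{\omega(\zeta)}{\zeta-z}\,dA(\zeta),
\]
which satisfies $s_{\bar z}=\omega$ in the distributional sense. This is the step where the assumption $p>2$ is indispensable: by the $L^p$ theory of Calder\'on--Zygmund singular integrals recalled in the remark following \eqref{met-2} (see \cite{IT}), this operator maps $L^p(V)$, $p>2$, continuously into the H\"older space $C^{0,1-2/p}(V)$, so $s$ is continuous and bounded on $V$. I would then put $g=Q\,e^{-s}$ and compute, using the product rule together with $s_{\bar z}=\omega$,
\[
g_{\bar z}=\left(Q_{\bar z}-Q\,s_{\bar z}\right)e^{-s}=\left(\omega Q-\omega Q\right)e^{-s}=0.
\]
By Weyl's lemma a weak solution of $g_{\bar z}=0$ is genuinely holomorphic, so $Q=e^{s}g$ with $g$ holomorphic on $V$ and $e^{s}$ continuous and nowhere vanishing.

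The conclusion then follows from the classical zero structure of holomorphic functions. Since $e^{s(z_0)}\neq 0$ and $Q(z_0)=0$, the point $z_0$ is a zero of $g$. Either $g\equiv 0$ on $V$, in which case $Q\equiv 0$ on $V$; or $z_0$ is a zero of $g$ of finite order $k\ge 1$, so $g(z)=(z-z_0)^k g_k(z)$ with $g_k$ holomorphic and $g_k(z_0)\neq 0$. Setting $Q_k=e^{s}g_k$, which is continuous with $Q_k(z_0)=e^{s(z_0)}g_k(z_0)\neq 0$, gives $Q(z)=(z-z_0)^k Q_k(z)$, as required. The main obstacle is precisely the regularity estimate in the third step---the continuity of the Cauchy transform of an $L^p$ density. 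This continuity holds only because $p>2$: it is the threshold for the two-dimensional embedding underlying the Calder\'on--Zygmund bound, and for $p\le 2$ the factor $e^{s}$ need not be continuous and the argument collapses, which is exactly why the integrability hypothesis on $f$ is imposed throughout the paper.
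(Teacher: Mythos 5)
The paper itself contains no proof of this lemma: it is quoted from Eschenburg--Tribuzy \cite{E-T} (see also the main lemma of \cite{A-dC-T}), so there is no internal argument to compare yours against. Your proof is correct and is, in substance, the classical similarity-principle argument underlying the cited reference: factor $Q_{\bar z}=\omega Q$ with $|\omega|\le f$ so $\omega\in L^p$, $p>2$; solve $s_{\bar z}=\omega$ by the Cauchy--Pompeiu transform, which lands in $C^{0,1-2/p}$; conclude via Weyl's lemma that $Q=e^{s}g$ with $g$ holomorphic, so that the zero structure of $Q$ is that of $g$ distorted by the continuous non-vanishing factor $e^{s}$. One minor attribution point: the H\"older continuity of the Cauchy transform of an $L^p$ density with $p>2$ follows directly from H\"older's inequality; the Calder\'on--Zygmund theory you invoke is what controls the $z$-derivative (the Beurling transform), as in solving the Beltrami equation, so your appeal to it is stronger than what this step actually requires.
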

This lemma has the following consequence for surfaces homeomorphic to the sphere. The argument is contained in the proof of the main theorem of \cite{A-dC-T}, and we include a proof here for the sake of completeness.
\begin{lemma}\label{A-dC-T}
Let $\Sigma$ be a Riemann surface homeomorphic to the sphere. Let $Qdz^2$ denote a complex quadratic differential on $\Sigma.$ Assume that
\begin{equation}\label{cauchy}
|Q_{\bar{z}}|\leq f_0|Q|,
\end{equation}
where $f_0:\Sigma\ria\R$ is a non-negative and $L^p$ function, $p>2,$ and $z$ is a local conformal parameter. Then $Q\equiv0$ in $\Sigma.$ 
\end{lemma}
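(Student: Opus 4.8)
The plan is to show that a globally defined complex quadratic differential $Qdz^2$ on a sphere cannot have isolated zeros of positive index, and then combine this with the topological constraint coming from the Poincar\'e-Hopf theorem. The key local input is Lemma \ref{E-T}: the similarity-principle type estimate tells us that near any zero $z_0$ of $Q$, either $Q$ vanishes identically in a neighborhood or $Q(z)=(z-z_0)^kQ_k(z)$ with $Q_k(z_0)\neq 0$ and $k\geq 1$. The first thing I would do is argue that the ``vanishes identically'' alternative propagates: the set where $Q\equiv 0$ is both open (by definition) and closed (a limit of interior-zero points is again such a point, since otherwise Lemma \ref{E-T} would force a local factorization with isolated zero, contradicting accumulation), so by connectedness of $\Sigma$ either $Q\equiv 0$ everywhere, which is the desired conclusion, or the zeros of $Q$ are isolated.

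Assuming we are in the isolated-zeros case, the next step is the index computation. A quadratic differential $Qdz^2$ defines a line field (two opposite directions) on $\Sigma$ away from its zeros, equivalently the argument $\tfrac{1}{2}\arg Q$ defines the direction up to a half-turn ambiguity. The local normal form $Q(z)=(z-z_0)^kQ_k(z)$ with $Q_k(z_0)\neq 0$ shows that the index of this line field at $z_0$ equals $-k/2$, so each zero contributes a strictly negative index. Summing over the finitely many isolated zeros (finiteness follows from compactness of $\Sigma$ together with isolation) gives a total index that is strictly negative whenever $Q$ has any zero at all.

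The topological obstruction is the Poincar\'e-Hopf index theorem for line fields: the sum of the indices of a line field with isolated singularities on a closed oriented surface equals its Euler characteristic, which for the sphere is $\chi(\s^2)=2>0$. Since the sum of the indices we computed is non-positive (indeed strictly negative as soon as a zero exists), we reach a contradiction unless $Q$ has no zeros at all. But a nowhere-vanishing holomorphic-like quadratic differential of this index type is itself incompatible with $\chi>0$; more directly, a quadratic differential with no zeros would give a total index $0\neq 2$, again contradicting Poincar\'e-Hopf. Hence the assumption that $Q$ is not identically zero is untenable, and we conclude $Q\equiv 0$ on $\Sigma$.

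The main obstacle I anticipate is making the index bookkeeping fully rigorous when $Q$ is merely continuous rather than holomorphic: the estimate \eqref{cauchy} only gives a pseudo-holomorphic (Carleman similarity principle) structure, so the clean statement ``index $=-k/2$'' must be extracted from the factorization $Q=(z-z_0)^kQ_k$ with $Q_k$ continuous and nonvanishing, rather than from genuine complex analyticity. The careful point is that $Q_k$, being continuous and nonzero near $z_0$, has a well-defined continuous argument and hence contributes zero to the local index, so the index is governed entirely by the $(z-z_0)^k$ factor; this is where the strength of Lemma \ref{E-T} is essential. A secondary technical care is confirming that the line field extends consistently across coordinate charts, i.e.\ that the half-turn ambiguity of a quadratic differential is globally well-defined on the sphere, which it is precisely because the transition is by $(dz)^2$ and squares away the sign ambiguity.
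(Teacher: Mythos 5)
Your proof is correct and follows essentially the same route as the paper's: invoke Lemma \ref{E-T} to split into the ``identically zero'' and ``isolated zero of index $-k/2$'' alternatives, propagate the first alternative by an open--closed (connectedness) argument, and rule out the second (as well as the zero-free case) via the Poincar\'e Index Theorem for the line field determined by $Qdz^2$, since the sphere's Euler characteristic is $2>0$. The paper's own proof is exactly this argument, stated more briefly; your additional remarks on the continuity of $Q_k$ and the chart-consistency of the line field are sound elaborations rather than deviations.
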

\begin{proof}
Let $U\subset\Sigma$ be an open set covered by isothermal coordinates. Assume that the set of zeros of $Q$ in $U$ is not empty and let $z_0\in U$ be a zero of $Q$. By the Lemma \ref{E-T}, either $Q$ is identically zero in a neighborhood $V$ of $z_0$ or this zero is isolated and the index of a direction field determined by $Im[Qdz^2] = 0$ is $-k/2$ (hence negative). If, for some coordinate neighborhood V of zero, $Q\equiv0$, this will be so for the whole $\Sigma$, otherwise, the zeroes on the boundary of $V$ will contradict Lemma \ref{E-T}. So if $Q$ is not identically zero, all zeroes are isolated and have negative indices. Since $\Sigma$ has genus zero, the sum of the indices of the singularities of any field of directions is 2 (hence positive) by the Poincar\'e Index Theorem. This contradiction shows that $Q$ is identically zero. Notice also that $Q$ must have a zero by the Poincar\'e Index theorem, since the sum of the index is $2$ (hence nonzero).
\end{proof}

\begin{proof}[Proof of Theorem \ref{hopf-warped}]
Differentiating the second equation of (\ref{change}) relative to $r,$ and by using the first one, we have
\[
r=u(r)h(t) \Rightarrow 1= u'(r)h(t) + u(r)h'(t)\frac{dt}{dr}\Rightarrow 1 = u'(r)h(t) + h'(t)
\]
which implies
\begin{equation}\label{warped-1}
u'(r)=\frac{1-h'(t)}{h(t)}.
\end{equation}
Differentiating (\ref{warped-1}) relative to $r$ and by using the first equation of (\ref{change}), we obtain
\[
u''(r) = \frac{d}{dt}\left(\frac{1-h'(t)}{h(t)}\right)\dfrac{dt}{dr} = \left(\dfrac{-h''(t)h(t) - (1-h'(t))h'(t)}{h(t)^2}\right)\frac{1}{u(r)},
\]
i.e.,
\begin{equation}
u''(r)u(r)= - \dfrac{h''(t)}{h(t)} - \dfrac{(1-h'(t))h'(t)}{h(t)^2}.
\end{equation}
On the other hand, for $F(x_1,x_2,x_3)=u(r),$ where $r=\sqrt{x_1^2+x_2^2+x_3^2},$ we have
\[
\frac{\partial F}{\partial x_i} = x_i\frac{u'(r)}{r}  \Rightarrow \frac{\partial^2 F}{\partial x_i \partial x_j} = \dfrac{u'(r)}{r}\delta_{ij} + \dfrac{x_ix_j}{r^2}\left(u''(r) - \frac{u'(r)}{r}\right).
\]
This implies, for $u,v\in\R^3,$
\[
\hess F(u,v) = \dfrac{u'(r)}{r}(u\cdot v) + \dfrac{1}{r^2}\left(u''(r) - \frac{u'(r)}{r}\right)(X\cdot u)(X\cdot v).
\]
We observe that
\begin{equation}\label{r-t}
\frac{\partial}{\partial t} = \frac{\partial}{\partial r}\frac{dr}{dt} = u(r)\frac{\partial}{\partial r}
\end{equation}
and $N_F = u(r)N$ gives
\[
\begin{aligned}
\nu=\left\lan \frac{\partial}{\partial t},N_F \right\ran &= \left\lan u(r)\frac{\partial}{\partial r},u(r)N \right\ran_F = u(r)^2\left[\frac{1}{u(r)^2}\left(\frac{\partial}{\partial r}\cdot N\right)\right]\\
&=\left(\frac{X}{\|X\|}\cdot N\right),
\end{aligned}
\]
provided $\partial/\partial r = X/\|X\|.$ Since 
\[
X\cdot X_z = \|X\|(\|X\|)_z = rr_z,\ X\cdot X_{\bar{z}} = \|X\|(\|X\|)_{\bar{z}} = rr_{\bar{z}},
\]
we can rewrite the equation \eqref{angle-f-eq} of Lemma \ref{angle-f} as
\begin{equation}\label{nu-r}
\dfrac{4}{\alpha u(r)^2}r_zr_{\bar{z}}+\nu^2=1.
\end{equation}
This implies, 
\[
\begin{aligned}
\frac{4}{\alpha F}\hess F(X_z,X_{\bar{z}}) &=\frac{4}{\alpha u(r)}\frac{u'(r)}{r}(X_z\cdot X_{\bar{z}}) + \frac{4}{\alpha u(r)r^2}\left(u''(r) - \frac{u'(r)}{r}\right)(X\cdot X_z)(X\cdot X_{\bar{z}})\\
&=\frac{4u'(r)u(r)}{r}\frac{\lan X_z,X_{\bar{z}}\ran_F}{\alpha} + \frac{4}{\alpha u(r)r^2}\left(u''(r) - \frac{u'(r)}{r}\right)(rr_z)(rr_{\bar{z}})\\
&= \frac{2u'(r)u(r)}{r} + \left(u''(r)u(r) - \frac{u(r)u'(r)}{r}\right)\frac{4}{\alpha u(r)^2}r_zr_{\bar{z}}\\
&=\frac{2u'(r)u(r)}{r} + \left(u''(r)u(r) - \frac{u(r)u'(r)}{r}\right)(1-\nu^2)\\
&=\frac{2(1-h'(t))}{h(t)^2} - \left(\frac{h''(t)}{h(t)} + \frac{(1-h'(t))h'(t)}{h(t)^2} + \frac{1-h'(t)}{h(t)^2}\right)(1-\nu^2)\\
&=\frac{2(1-h'(t))}{h(t)^2} - \left(\frac{h''(t)}{h(t)} + \frac{1-h'(t)^2}{h(t)^2}\right)(1-\nu^2).
\end{aligned}
\]
Since, by using (\ref{warped-1}),
\[
\|\n F\|^2 = u'(r)^2 = \frac{(1-h'(t))^2}{h(t)^2},
\]
we have
\[
\begin{aligned}
-\|\n F\|^2 + \frac{4}{\alpha F}\hess F(X_z,X_{\bar{z}}) &= -\dfrac{(1-h'(t))^2}{h(t)^2} + \frac{2(1-h'(t))}{h(t)^2} - \left(\frac{h''(t)}{h(t)} + \frac{1-h'(t)^2}{h(t)^2}\right)(1-\nu^2)\\
&=\dfrac{1-h'(t)^2}{h(t)^2}- \left(\frac{h''(t)}{h(t)} + \frac{1-h'(t)^2}{h(t)^2}\right)(1-\nu^2)\\
&=K_{\tan}(t) - (K_{\tan}(t) - K_{\rad}(t))(1-\nu^2).
\end{aligned}
\]
Thus, by using Lemma \ref{appendix}, p. \pageref{appendix}, and the the equation \eqref{hopf-norm}, p.\pageref{hopf-norm}, we have
\begin{equation}\label{P-000}
\begin{aligned}
|P| &= \frac{1}{2}\sqrt{H^2 - K + \overline{K}(T\Sigma)}\\
    &= \frac{1}{2}\sqrt{H^2-K  -\|\n F\|^2 + \frac{4}{\alpha F}\hess F(X_z,X_{\bar{z}})}\\
    &= \frac{1}{2}\sqrt{H^2-K + K_{\tan}(t) - (K_{\tan}(t) - K_{\rad}(t))(1-\nu^2)}.\\
\end{aligned}
\end{equation}
On the other hand, since 
\[
\dfrac{\partial r}{\partial z} = \dfrac{dr}{dt}\dfrac{\partial t}{\partial z}=u(r)\dfrac{\partial t}{\partial z},
\]
we have
\begin{equation}\label{Pz-000}
\begin{aligned}
\hess F(X_z,N)&= \dfrac{1}{r^2}\left(u''(r) - \dfrac{u'(r)}{r}\right)(X\cdot X_z)(X\cdot N)\\
&=\left(u''(r)u(r) - \frac{u'(r)u(r)}{r}\right)\dfrac{\nu}{u(r)}r_z\\
&=-\left(\frac{h''(t)}{h(t)}+\frac{1-h'(t)^2}{h(t)^2}\right)\dfrac{\nu}{u(r)}r_z\\
&=-(K_{\tan}(t) - K_{\rad}(t))\dfrac{\nu}{u(r)}r_z\\
&=-(K_{\tan}(t) - K_{\rad}(t))\nu \dfrac{\partial t}{\partial z}.
\end{aligned}
\end{equation}
Replacing \eqref{Pz-000} in \eqref{hess-NF} and then in \eqref{hopf-diff}, we obtain
\[
P_{\bar{z}}=\frac{\alpha}{2}H_z -\frac{\alpha}{2}\hess F(X_z,N) = \frac{\alpha}{2}[H_z+(K_{\tan}(t)-K_{\rad}(t))\nu t_z]. 
\]
Since 
\[
\begin{aligned}
|P_{\bar{z}}|&=\frac{\alpha}{2}|H_z + (K_{\tan}(t) - K_{\rad}(t))\nu t_z|\\
& = \frac{\alpha}{2}|dH(X_z) + (K_{\tan}(t) - K_{\rad}(t))\nu dt(X_z)|\\
&\leq \frac{\alpha}{2}|dH + (K_{\tan}(t) - K_{\rad}(t))\nu dt|\|X_z\|\\
&= \left(\frac{\alpha}{2}\right)^{3/2}|dH + (K_{\tan}(t) - K_{\rad}(t))\nu dt|,
\end{aligned}
\]
the hypothesis \eqref{ineq-warped} and \eqref{P-000} imply
\[
|P_{\bar{z}}|\leq  2\left(\frac{\alpha}{2}\right)^{3/2}f|P|.
\]
This implies, by using Lemma \ref{A-dC-T}, p. \pageref{A-dC-T}, for $f_0=2\left(\frac{\alpha}{2}\right)^{3/2}\!\!f$, that $P=0$ everywhere in $\Sigma.$ Therefore, by \eqref{hopf-diff-000}, $\Sigma$ is umbilic.

If $H$ is constant, then $|P|\equiv 0$ gives
\[
|K_{\tan}(t) - K_{\rad}(t)||\nu||dt|\equiv 0.
\]
This and the hypothesis that $K_{\tan}(t)\neq K_{\rad}(t),$ except possibly by a discret set of values $t\in I,$ gives $|K_{\tan}(t) - K_{\rad}(t)|=0$ only for discrete set, which implies, by continuity, that 
\[
|\nu||dt|\equiv 0.
\]
Let $D_1=\{z\in\Sigma; \nu=0\}$ and $D_2 = \{z\in\Sigma; dt=0\}.$ We have $D_1\cup D_2=\Sigma.$ Observe that, by using that
\[
r_z = \frac{\partial r}{\partial z} = \frac{dr}{dt}\frac{\partial t}{\partial z} = u(r)t_z
\]
and analogously for $r_{\bar{z}},$ in (\ref{nu-r}), we have
\[
\frac{4}{\alpha}t_zt_{\bar{z}} + \nu^2=1.
\]
This gives that $\nu^2=1$ in $D_2.$ Thus, by continuity of $\nu,$ we have $D_1\cap D_2=\emptyset,$ and $D_1=\emptyset$ or $D_1=\Sigma.$ But, since $\Sigma$ is compact, there exist $t_0,t_1\in I$ such that $\Sigma\subset[t_0,t_1]\times \s^2.$ Taking the least value of $t_1$ with this property, we have that $\Sigma$ is tangent to the slice $\{t_1\}\times\s^2$ and, at this point of tangency, we have $\nu^2=1,$ i.e., $D_2\neq 0.$ Thus $D_1=\emptyset$ and $D_2=\Sigma,$ which implies $dt=0$ everywhere and this gives that $t$ is constant, i.e., $X(\Sigma)$ is a slice.


\end{proof}


\begin{bibdiv}
\begin{biblist}

\bib{aledo}{article}{
   author={Aledo, Juan A.},
   author={Rubio, Rafael M.},
   title={Stable minimal surfaces in Riemannian warped products},
   journal={J. Geom. Anal.},
   volume={27},
   date={2017},
   number={1},
   pages={65--78},
   issn={1050-6926},
   review={\MR{3606544}},
   doi={10.1007/s12220-015-9673-8},
}

\bib{ASN}{article}{
   author={Alencar, Hil\'{a}rio},
   author={Silva Neto, Greg\'{o}rio},
   title={Isoperimetric inequalities and monotonicity formulas for
   submanifolds in warped products manifolds},
   journal={Rev. Mat. Iberoam.},
   volume={34},
   date={2018},
   number={4},
   pages={1821--1852},
   issn={0213-2230},
   review={\MR{3896251}},
   doi={10.4171/rmi/1045},
}
		
\bib{AD-1}{article}{
   author={Al\'{i}as, Luis J.},
   author={Dajczer, Marcos},
   title={Uniqueness of constant mean curvature surfaces properly immersed
   in a slab},
   journal={Comment. Math. Helv.},
   volume={81},
   date={2006},
   number={3},
   pages={653--663},
   issn={0010-2571},
   review={\MR{2250858}},
   doi={10.4171/CMH/68},
}

\bib{AD-2}{article}{
   author={Al\'{i}as, Luis J.},
   author={Dajczer, Marcos},
   title={Constant mean curvature hypersurfaces in warped product spaces},
   journal={Proc. Edinb. Math. Soc. (2)},
   volume={50},
   date={2007},
   number={3},
   pages={511--526},
   issn={0013-0915},
   review={\MR{2360513}},
   doi={10.1017/S0013091505001069},
}

\bib{AIR}{article}{
   author={Al\'{i}as, Luis J.},
   author={Impera, Debora},
   author={Rigoli, Marco},
   title={Hypersurfaces of constant higher order mean curvature in warped
   products},
   journal={Trans. Amer. Math. Soc.},
   volume={365},
   date={2013},
   number={2},
   pages={591--621},
   issn={0002-9947},
   review={\MR{2995367}},
   doi={10.1090/S0002-9947-2012-05774-6},
}

\bib{A-dC-T}{article}{
   author={Alencar, Hilario},
   author={do Carmo, Manfredo},
   author={Tribuzy, Renato},
   title={A theorem of Hopf and the Cauchy-Riemann inequality},
   journal={Comm. Anal. Geom.},
   volume={15},
   date={2007},
   number={2},
   pages={283--298},
   issn={1019-8385},
   review={\MR{2344324}},
}


\bib{bessa}{article}{
   author={Bessa, G. P.},
   author={Garc\'{i}a-Mart\'{i}nez, S. C.},
   author={Mari, L.},
   author={Ramirez-Ospina, H. F.},
   title={Eigenvalue estimates for submanifolds of warped product spaces},
   journal={Math. Proc. Cambridge Philos. Soc.},
   volume={156},
   date={2014},
   number={1},
   pages={25--42},
   issn={0305-0041},
   review={\MR{3144209}},
   doi={10.1017/S0305004113000443},
}

\bib{besse}{book}{
   author={Besse, Arthur L.},
   title={Einstein manifolds},
   series={Ergebnisse der Mathematik und ihrer Grenzgebiete (3) [Results in
   Mathematics and Related Areas (3)]},
   volume={10},
   publisher={Springer-Verlag, Berlin},
   date={1987},
   pages={xii+510},
   isbn={3-540-15279-2},
   review={\MR{867684}},
   doi={10.1007/978-3-540-74311-8},
}

\bib{BCL}{article}{
   author={Bezerra, K. S.},
   author={Caminha, A.},
   author={Lima, B. P.},
   title={On the stability of minimal cones in warped products},
   journal={Bull. Braz. Math. Soc. (N.S.)},
   volume={45},
   date={2014},
   number={3},
   pages={485--503},
   issn={1678-7544},
   review={\MR{3264802}},
   doi={10.1007/s00574-014-0059-5},
}

\bib{B-M}{article}{
   author={Bray, Hubert},
   author={Morgan, Frank},
   title={An isoperimetric comparison theorem for Schwarzschild space and
   other manifolds},
   journal={Proc. Amer. Math. Soc.},
   volume={130},
   date={2002},
   number={5},
   pages={1467--1472},
   issn={0002-9939},
   review={\MR{1879971}},
   doi={10.1090/S0002-9939-01-06186-X},
}

%
\bib{bray-2}{article}{
   author={Bray, Hubert L.},
   title={Proof of the Riemannian Penrose inequality using the positive mass theorem},
   journal={J. Differential Geom.},
   volume={59},
   date={2001},
   number={2},
   pages={177--267},
   issn={0022-040X},
   review={\MR{1908823}},
}

\bib{B-ON}{article}{
   author={Bishop, R. L.},
   author={O'Neill, B.},
   title={Manifolds of negative curvature},
   journal={Trans. Amer. Math. Soc.},
   volume={145},
   date={1969},
   pages={1--49},
   issn={0002-9947},
   review={\MR{0251664}},
   doi={10.2307/1995057},
}

\bib{brendle-0}{article}{
   author={Brendle, Simon},
   title={Constant mean curvature surfaces in warped product manifolds},
   journal={Publ. Math. Inst. Hautes \'{E}tudes Sci.},
   volume={117},
   date={2013},
   pages={247--269},
   issn={0073-8301},
   review={\MR{3090261}},
   doi={10.1007/s10240-012-0047-5},
}




\bib{chern-iso}{article}{
   author={Chern, Shiing-shen},
   title={An elementary proof of the existence of isothermal parameters on a
   surface},
   journal={Proc. Amer. Math. Soc.},
   volume={6},
   date={1955},
   pages={771--782},
   issn={0002-9939},
   review={\MR{74856}},
   doi={10.2307/2032933},
}

\bib{chern}{article}{
   author={Chern, Shiing Shen},
   title={On surfaces of constant mean curvature in a three-dimensional
   space of constant curvature},
   conference={
      title={Geometric dynamics},
      address={Rio de Janeiro},
      date={1981},
   },
   book={
      series={Lecture Notes in Math.},
      volume={1007},
      publisher={Springer, Berlin},
   },
   date={1983},
   pages={104--108},
   review={\MR{730266}},
   doi={10.1007/BFb0061413},
}


\bib{DR}{article}{
   author={Dajczer, Marcos},
   author={Ripoll, Jaime},
   title={An extension of a theorem of Serrin to graphs in warped products},
   journal={J. Geom. Anal.},
   volume={15},
   date={2005},
   number={2},
   pages={193--205},
   issn={1050-6926},
   review={\MR{2152479}},
   doi={10.1007/BF02922192},
}



\bib{Eisen}{book}{
   author={Eisenhart, Luther Pfahler},
   title={Riemannian Geometry},
   note={2d printing},
   publisher={Princeton University Press, Princeton, N. J.},
   date={1949},
   pages={vii+306},
   review={\MR{0035081}},
}

\bib{E-T}{article}{
   author={Eschenburg, J.-H.},
   author={Tribuzy, R.},
   title={Conformal mappings of surfaces and Cauchy-Riemann inequalities},
   conference={
      title={Differential geometry},
   },
   book={
      series={Pitman Monogr. Surveys Pure Appl. Math.},
      volume={52},
      publisher={Longman Sci. Tech., Harlow},
   },
   date={1991},
   pages={149--170},
   review={\MR{1173039}},
}
\bib{GIR}{article}{
   author={Garc\'{i}a-Mart\'{i}nez, Sandra C.},
   author={Impera, Debora},
   author={Rigoli, Marco},
   title={A sharp height estimate for compact hypersurfaces with constant
   $k$-mean curvature in warped product spaces},
   journal={Proc. Edinb. Math. Soc. (2)},
   volume={58},
   date={2015},
   number={2},
   pages={403--419},
   issn={0013-0915},
   review={\MR{3341446}},
   doi={10.1017/S0013091514000157},
}

\bib{Gimeno}{article}{
   author={Gimeno, Vicent},
   title={Isoperimetric inequalities for submanifolds. Jellett-Minkowski's
   formula revisited},
   journal={Proc. Lond. Math. Soc. (3)},
   volume={110},
   date={2015},
   number={3},
   pages={593--614},
   issn={0024-6115},
   review={\MR{3342099}},
   doi={10.1112/plms/pdu053},
}
\bib{GLW}{article}{
   author={Guan, Pengfei},
   author={Li, Junfang},
   author={Wang, Mu-Tao},
   title={A volume preserving flow and the isoperimetric problem in warped
   product spaces},
   journal={Trans. Amer. Math. Soc.},
   volume={372},
   date={2019},
   number={4},
   pages={2777--2798},
   issn={0002-9947},
   review={\MR{3988593}},
   doi={10.1090/tran/7661},
}

\bib{GL}{article}{
   author={Guan, Pengfei},
   author={Lu, Siyuan},
   title={Curvature estimates for immersed hypersurfaces in Riemannian
   manifolds},
   journal={Invent. Math.},
   volume={208},
   date={2017},
   number={1},
   pages={191--215},
   issn={0020-9910},
   review={\MR{3621834}},
   doi={10.1007/s00222-016-0688-y},
}

\bib{hopf}{article}{
   author={Hopf, Heinz},
   title={\"{U}ber Fl\"{a}chen mit einer Relation zwischen den Hauptkr\"{u}mmungen},
   language={German},
   journal={Math. Nachr.},
   volume={4},
   date={1951},
   pages={232--249},
   issn={0025-584X},
   review={\MR{0040042}},
   doi={10.1002/mana.3210040122},
}
\bib{hopf-1000}{book}{
   author={Hopf, Heinz},
   title={Differential geometry in the large},
   series={Lecture Notes in Mathematics},
   volume={1000},
   note={Notes taken by Peter Lax and John Gray;
   With a preface by S. S. Chern},
   publisher={Springer-Verlag, Berlin},
   date={1983},
   pages={vii+184},
   isbn={3-540-12004-1},
   review={\MR{707850}},
   doi={10.1007/978-3-662-21563-0},
}
\bib{Korn}{article}{
   author={Korn, Arthur},
   title={Zwei Anwendungen der Methode der sukzessiven Ann\"aherungen},
   journal={Schwarz-Festschr.},
   date={1914},
   language={German}
   pages={215--229},
   review={ Zbl 45.0568.01},
}

\bib{Kulk}{article}{
   author={Kulkarni, Ravindra Shripad},
   title={Curvature and metric},
   journal={Ann. of Math. (2)},
   volume={91},
   date={1970},
   pages={311--331},
   issn={0003-486X},
   review={\MR{257932}},
   doi={10.2307/1970580},
}

\bib{Lich}{article}{
   author={Lichtenstein, L.},
   title={Zur Theorie der konformen Abbildung nichtanalytischer, singularit\"atenfreier Fl\"achenst\"ucke auf ebene Gebiete},
   journal={Krak. Anz.},
   language={German}
   date={1916},
   pages={192--217},
   review={ Zbl 46.0547.01},
}

\bib{montiel-1}{article}{
   author={Montiel, Sebasti\'{a}n},
   title={Stable constant mean curvature hypersurfaces in some Riemannian
   manifolds},
   journal={Comment. Math. Helv.},
   volume={73},
   date={1998},
   number={4},
   pages={584--602},
   issn={0010-2571},
   review={\MR{1639892}},
   doi={10.1007/s000140050070},
}
\bib{Montiel}{article}{
   author={Montiel, Sebasti{\'a}n},
   title={Unicity of constant mean curvature hypersurfaces in some
   Riemannian manifolds},
   journal={Indiana Univ. Math. J.},
   volume={48},
   date={1999},
   number={2},
   pages={711--748},
   issn={0022-2518},
   review={\MR{1722814 (2001f:53131)}},
   doi={10.1512/iumj.1999.48.1562},
}
\bib{pacard}{article}{
   author={Pacard, F.},
   author={Xu, X.},
   title={Constant mean curvature spheres in Riemannian manifolds},
   journal={Manuscripta Math.},
   volume={128},
   date={2009},
   number={3},
   pages={275--295},
   issn={0025-2611},
   review={\MR{2481045}},
   doi={10.1007/s00229-008-0230-7},
}

\bib{petersen}{book}{
   author={Petersen, Peter},
   title={Riemannian geometry},
   series={Graduate Texts in Mathematics},
   volume={171},
   edition={2},
   publisher={Springer, New York},
   date={2006},
   pages={xvi+401},
   isbn={978-0387-29246-5},
   isbn={0-387-29246-2},
   review={\MR{2243772}},
}
\bib{ritore}{article}{
   author={Ritor\'{e}, Manuel},
   title={Constant geodesic curvature curves and isoperimetric domains in
   rotationally symmetric surfaces},
   journal={Comm. Anal. Geom.},
   volume={9},
   date={2001},
   number={5},
   pages={1093--1138},
   issn={1019-8385},
   review={\MR{1883725}},
   doi={10.4310/CAG.2001.v9.n5.a5},
}
\bib{sal-sal}{article}{
   author={Salamanca, Juan J.},
   author={Salavessa, Isabel M. C.},
   title={Uniqueness of $\phi$-minimal hypersurfaces in warped
   product manifolds},
   journal={J. Math. Anal. Appl.},
   volume={422},
   date={2015},
   number={2},
   pages={1376--1389},
   issn={0022-247X},
   review={\MR{3269517}},
   doi={10.1016/j.jmaa.2014.09.028},
}

\bib{SN}{article}{
   author={Silva Neto, Greg\'{o}rio},
   title={Stability of constant mean curvature surfaces in three-dimensional
   warped product manifolds},
   journal={Ann. Global Anal. Geom.},
   volume={56},
   date={2019},
   number={1},
   pages={57--86},
   issn={0232-704X},
   review={\MR{3962026}},
   doi={10.1007/s10455-019-09656-x},
}
\bib{IT}{book}{
   author={Imayoshi, Y.},
   author={Taniguchi, M.},
   title={An introduction to Teichm\"{u}ller spaces},
   note={Translated and revised from the Japanese by the authors},
   publisher={Springer-Verlag, Tokyo},
   date={1992},
   pages={xiv+279},
   isbn={4-431-70088-9},
   review={\MR{1215481}},
   doi={10.1007/978-4-431-68174-8},
}


\bib{Xia-Wu-1}{article}{
   author={Wu, Jie},
   author={Xia, Chao},
   title={On rigidity of hypersurfaces with constant curvature functions in
   warped product manifolds},
   journal={Ann. Global Anal. Geom.},
   volume={46},
   date={2014},
   number={1},
   pages={1--22},
   issn={0232-704X},
   review={\MR{3205799}},
   doi={10.1007/s10455-013-9405-x},
}

\bib{Xia-Wu-2}{article}{
   author={Wu, Jie},
   author={Xia, Chao},
   title={Hypersurfaces with constant curvature quotients in warped product
   manifolds},
   journal={Pacific J. Math.},
   volume={274},
   date={2015},
   number={2},
   pages={355--371},
   issn={0030-8730},
   review={\MR{3332908}},
   doi={10.2140/pjm.2015.274.355},
}
\end{biblist}
\end{bibdiv}

\end{document}